\newcommand\blfootnote[1]{%
	\begingroup
	\renewcommand\thefootnote{}\footnote{#1}%
	\addtocounter{footnote}{-1}%
	\endgroup
}
\def\C{\mathbb{C}}
\def\R{\mathbb{R}}
\def\H{\mathbb{H}}
\def\Z{\mathbb{Z}}
\def\F{\mathcal{F}}
\def\N{\mathbb{N}}
\def\F{\mathcal {F}}
\def\T{\mathbb{T}}
\def\Heis{{\sf{Heis}} }
\def\Der{{\sf{Der}}}
\def\heis{{\mathfrak{heis}} }
\def\Aff{\sf{Aff}}
\def\L{\mathcal L}
\def\S{\mathbb S}
\def\L{{\sf{L}}}
\def\N{{\mathcal{N}}}
\def\Aut{{\sf{Aut}}}
\def\PSL{{\sf{PSL}}}
\def\GL{{\sf{GL}}}
\def\O{{\sf{O}}}
\def\SO{{\sf{SO}}}
\def\SL{{\sf{SL}}}
\def\Span{{\sf{Span}} }
\def\Mink{{\sf{Mink}} }
\def\AdS{{\sf{AdS}} }
\def\det{{\sf{det}}}
\def\sl{{\mathfrak{sl}}}
\def\g{{\mathfrak{g}}}
\newtheorem{theorem}{{Theorem}}[section]
\newtheorem{proposition}[theorem]{{Proposition}}
\newtheorem{isom.ext}[theorem]{{Trivial isometric extension}}
\newtheorem{definition}[theorem]{{Definition}}
\newtheorem{lemma}[theorem]{{Lemma}}
\newtheorem{corollary}[theorem]{{Corollary}}
\newtheorem{fact}[theorem]{{\sc Fact}}
\newtheorem{remark}[theorem]{{Remark}}
\newtheorem{question}[theorem]{{Question}}
\newtheorem{example}[theorem]{{Example}}
\definecolor{purple}{rgb}{0.65,0.12,0.94}
\definecolor{forestgreen}{rgb}{0.4,0.64,0.13}
\begin{document}
	\title[]{Introduction to Kundt spaces}\footnote{To appear in the  Proceedings of the  \href{https://www.gelomer2024.net}{XI International Meeting on Lorentzian Geometry} (GeLoMer2024).}

	\author [L. Mehidi]{Lilia Mehidi}
	\address{Departamento de Geometria y Topologia
		\hfill\break\indent
		Facultad de Ciencias, Universidad de Granada, Spain}
	\email{lilia.mehidi@ugr.es
		\hfill\break\indent
\url{https://mehidi.pages.math.cnrs.fr/siteweb/}}
	
	\author[A. Zeghib]{Abdelghani Zeghib }
	\address{UMPA, CNRS, 
		\'Ecole Normale Sup\'erieure de Lyon, France}
	\email{abdelghani.zeghib@ens-lyon.fr 
		\hfill\break\indent
\url{http://www.umpa.ens-lyon.fr/~zeghib/}}
	
	\date{ \today}
	\maketitle

	\begin{abstract}
		This paper provides an introduction to Kundt spaces, clarifying several important properties, many of which are typically scattered across the mathematical literature or presented without explicit reference to Kundt terminology. While not exhaustive, our approach aims to offer a pedagogical introduction, using a more geometric language and focusing on key concepts directly related to these spaces, such as lightlike totally geodesic foliations.
	\end{abstract}
	\tableofcontents
	
	\section{Introduction}
	
	Kundt spaces play a significant role in General Relativity and mathematical physics. They constitute the underlying (as well as a unifying) structure for many Lorentzian spaces arising in General Relativity and alternative gravity theories, including plane waves, pp-waves, Siklos spaces.. 
	Our motivation here is to provide an introduction to the mathematics of Kundt spaces through a coordinate-free approach, which is hard to find in the General Relativity literature. We gather and present important properties and key concepts related to those spaces, often dispersed across the mathematical literature and sometimes presented without use of this terminology. This text is not intended to be exhaustive; several aspects are intentionally left out. 
	
	\subsection{Some vocabulary}
	We introduce here some vocabulary:
	\begin{definition}
		Let $M$ be a Lorentzian manifold. A vector subspace $E$ of the tangent space $T_x M$ at $x$ is said to be  lightlike if the restriction of the metric to $E$ is  degenerate. \\
        By analogy, we say that a submanifold $S \subset M$ is lightlike if for any point $x \in S$, $T_x S$ is  lightlike. 
	\end{definition}

	Given a non-singular vector field $V$ on $M$ which is lightlike (i.e. $V(x)$ is lightlike, for all $x \in M$), the distribution $V^\perp$ is lightlike, and the quotient bundle $V^\perp/\R V$ inherits a Riemannian metric. 
	\begin{definition}
		Let $M$ be a Lorentzian manifold, and let $V$ be a non-singular lightlike  vector field on $M$. The vector field $V$ is said to be \textbf{geodesic} if it satisfies $\nabla_V V=0$. In other words, the parameterized orbits of $V$ are geodesics.	
	\end{definition}
	A  non-singular geodesic lightlike vector field defines a family of lightlike geodesics that fill the space, forming what is known as a \textit{null geodesic congruence}. The following terminology can be found  in \cite{penrose}, in the chapter entitled `null congruences', or in \cite{exact-solutions}: 
	\begin{definition}
		Let $M$ be a Lorentzian manifold. Let $V$ be a non-singular lightlike  vector field on $M$. Then $V$ is said to be
		\begin{enumerate}
			\item \textbf{Twist-free} if the distribution $V^\perp$ is integrable, hence tangent to a foliation $\F$. In this case, the leaves of $\F$ are lightlike, in which case we say that the foliation is lightlike.  
			\item \textbf{Shear-free} if the flow of $V$ preserves the conformal Riemannian structure on $V^\perp/V$. 
			\item \textbf{Divergence-free} if the flow of $V$ preserves the volume form of $V^\perp/V$. 
		\end{enumerate}
	\end{definition}
	A conformal map  preserving the volume is necessarily an isometry. 
	Consequently, $V$ is twist-free, shear-free, and divergence-free if and only if $V^\perp$ is integrable, and the (local) flow of $V$ (and hence the local flow of any vector field tangent to $V$) preserves the Riemannian metric induced on $V^\perp /V$. 
	The foliation $\mathcal{V}$, defined by such a vector field $V$, is said to be `transversally Riemannian' when restricted to any leaf of the foliation $\F$. 
	
	\subsection{Kundt spaces}
	A Kundt space is defined in the literature as a Lorentzian manifold with a non-singular lightlike vector field $V$ which is geodesic, twist-free, shear-free, and divergence-free (see, for instance, \cite{coley1, coley2, coley3}). For a detailed account of their significance in General Relativity, we refer to the book \cite{exact-solutions}, a comprehensive reference on exact solutions to Einstein's field equations in General Relativity. This book discusses Kundt spaces, including the properties of null geodesic congruences and their role in defining Kundt geometries.
	\bigskip
	
	The conditions of being twist-free, shear-free, and divergence-free, or equivalently, the `transversally Riemannian' property of the foliation $\mathcal{V}$ when restricted to any leaf of $\mathcal{F}$, turns out to be equivalent to another geometric property: $\mathcal{F}$ being a lightlike, totally geodesic foliation. This equivalence was observed in \cite{zeghibgeo}, where codimension one lightlike totally geodesic foliations are studied. In Section \ref{Section: Generalities: the mathematics behind Kundt spaces}, we review this equivalence as the first key result, enabling Kundt spaces to be characterized in terms of totally geodesic foliations.  This key concept is recalled below:
	
	\begin{definition}
		Let $M$ be a semi-Riemannian manifold.  A submanifold $L \subset M$ is totally geodesic if any geodesic in $M$ which is
		somewhere tangent to $L$ is locally contained in $L$: if $\gamma(t)$ is a geodesic of $M$ defined in a neighborhood of $0$, and if $\gamma'(0) \in T_{\gamma(0)} L$, then $\gamma(t) \in L$ in a neighborhood of $0$. Equivalently, the space of $C^1$ vector fields tangent to $L$ is invariant under covariant derivation (this equivalence holds for any torsion-free connection). 
	\end{definition}
	
	\begin{definition} A totally geodesic foliation $\F$ of $M$ is a foliation whose leaves are totally geodesic submanifolds of $M$. 
	\end{definition}
	
	In Section \ref{Section: Local vs Global}, we derive the local form of the metric for a Kundt space. By specializing to certain simplified metric forms, we identify several well-known subfamilies, including Brinkmann spaces and Siklos spaces.
	
	\subsection{Topology and dynamics} 
	It is well known that a smooth compact manifold admits a Lorentzian metric if and only if  it has zero Euler number, as discussed in \cite[Proposition 37, p. 149]{BO}. However, the existence of a Kundt structure introduces additional topological obstructions, due to the presence of a lightlike totally geodesic foliation. This is addressed in Section \ref{Section: Global topology of Kundt spaces}, starting with the straightforward $2$-dimensional case and extending to the $3$-dimensional case. Another interesting question in this context is: what dynamics lead to a Kundt (or locally Kundt) structure? Here, `dynamics' refers to the action of the isometry group. 
    The result we will review  in the final section is that, in the homogeneous case,  a ``big'' isotropy group leads to a structure that closely resembles a locally Kundt structure. \bigskip

    \paragraph{\textbf{Acknowledgment}} We would like to thank the referee for the valuable comments and suggestions that helped improve the quality of the presentation. 
	
	\section{Generalities: the mathematics behind Kundt spaces}\label{Section: Generalities: the mathematics behind Kundt spaces}
	\subsection{Totally geodesic lightlike foliations}
	Codimension $1$ totally geodesic and lightlike foliations are studied in \cite{zeghibgeo}. As shown in Proposition \ref{Proposition: tot geod foliation iff trans. Riemannian} below,  such a foliation is characterized by the fact that it contains a subfoliation of dimension $1$ whose restriction to each leaf is transversally Riemannian. \\
    
    \textbf{Notation.} Throughout this paper, for  a Lorentzian manifold $(M,\F)$ with a codimension $1$ lightlike  foliation $\F$, we denote by $\mathcal{V}$ the $1$-dimensional subfoliation tangent to $T \F^\perp$. This subfoliation will be referred to as \textit{the normal foliation}.

\begin{proposition}\label{Proposition: tot geod foliation iff trans. Riemannian}
Let $(M,\F)$ be a Lorentzian manifold with a codimension $1$ lightlike foliation. Then $\F$ is totally geodesic if and only if its normal foliation $\mathcal{V}$ is leafwise transversally Riemannian, that is, any vector field tangent to $\mathcal{V}$ preserves the degenerate Riemannian metric on $T\F$.  
\end{proposition}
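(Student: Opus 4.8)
The plan is to reduce both conditions to statements about a single bilinear form and then to exploit integrability. I would work locally with a lightlike vector field $V$ spanning the normal line $T\mathcal{F}^\perp = T\mathcal{V}$. Since $\mathcal{F}$ has codimension one and is lightlike, the line $T\mathcal{F}^\perp$ is null and in fact lies inside $T\mathcal{F}$ (it is the radical of the degenerate metric $h := g|_{T\mathcal{F}}$), so that $T\mathcal{F} = V^\perp = \ker\omega$, where $\omega := g(V,\cdot)$ is the $1$-form dual to $V$. The central object will be the tensorial bilinear form $B(X,Y) := g(\nabla_X V, Y)$ on $T\mathcal{F}$.

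First I would translate the totally geodesic condition. For $X,Y$ tangent to $\mathcal{F}$ one has $g(\nabla_X Y, V) = X\,g(Y,V) - g(Y, \nabla_X V) = -B(X,Y)$, because $g(Y,V)\equiv 0$; hence $\nabla_X Y \in T\mathcal{F} = V^\perp$ for all such $X,Y$ if and only if $B \equiv 0$ on $T\mathcal{F}\times T\mathcal{F}$. Next I would translate the transversally Riemannian condition. It suffices to test the flow of $V$ itself: for $W = fV$ and $X,Y$ tangent to $\mathcal{F}$ a short computation gives $(\mathcal{L}_{fV}g)(X,Y) = f\,(\mathcal{L}_V g)(X,Y)$, the correction terms vanishing since $g(V,X)=g(V,Y)=0$. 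Moreover $(\mathcal{L}_V g)(X,Y)$ coincides with the leafwise Lie derivative of $h$, because $[V,X],[V,Y]$ stay tangent to $\mathcal{F}$. Using the Levi-Civita identity $(\mathcal{L}_V g)(X,Y) = g(\nabla_X V, Y) + g(\nabla_Y V, X)$, the transversally Riemannian condition becomes exactly the vanishing of the symmetric part of $B$ on $T\mathcal{F}\times T\mathcal{F}$.

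At this stage the two conditions differ only by the antisymmetric part of $B$, and the decisive step is to show this part vanishes automatically. I would identify it with $d\omega$: a torsion-free computation gives $B(X,Y) - B(Y,X) = g(\nabla_X V, Y) - g(\nabla_Y V, X) = d\omega(X,Y)$. Since $\mathcal{F}$ is a foliation, $T\mathcal{F} = \ker\omega$ is integrable, so Frobenius yields $\omega\wedge d\omega = 0$, that is $d\omega(X,Y)=0$ whenever $X,Y \in \ker\omega = T\mathcal{F}$. Therefore $B$ is symmetric on $T\mathcal{F}\times T\mathcal{F}$, and $B\equiv 0$ is equivalent to the vanishing of its symmetric part. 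Combining this with the two translations above yields the claimed equivalence.

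I expect the main obstacle to be precisely this last step: recognizing that the antisymmetric (twist) part of $\nabla V$ is forced to vanish along $\mathcal{F}$ by the integrability hypothesis through Frobenius, which is exactly the point where being a foliation (twist-free) enters. The remainder is bookkeeping with the Levi-Civita connection; the only other point needing care is verifying that the leafwise invariance of $h$ is genuinely tensorial and reduces to the single field $V$, so that \emph{transversally Riemannian} becomes a pointwise condition on $B$ directly comparable to \emph{totally geodesic}.
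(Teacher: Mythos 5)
Your proof is correct, and all the key identities check out: the identification of the totally geodesic condition with the vanishing of $B(X,Y)=g(\nabla_X V,Y)$ on $T\F\times T\F$, the identification of the leafwise transversally Riemannian condition with the vanishing of the symmetric part of $B$ via $(\mathcal{L}_V g)(X,Y)=g(\nabla_X V,Y)+g(\nabla_Y V,X)$ (together with the reduction $\mathcal{L}_{fV}g=f\,\mathcal{L}_V g$ on $T\F$), and the identification of the antisymmetric part with $d\omega\vert_{T\F\times T\F}$, which vanishes by integrability of $\ker\omega$. However, your route is packaged differently from the paper's. The paper proves this as the equivalence (1)$\Leftrightarrow$(2) of its key lemma by a purely diagonal computation: taking $Z$ collinear to $V$ and $X$ tangent to the leaf with $[Z,X]=0$, it gets $Z(g(X,X))=2g(\nabla_Z X,X)=2g(\nabla_X Z,X)=-2g(Z,\nabla_X X)$, so the metric-preservation condition (tested on $g(X,X)$) and the geodesic condition (tested on $\nabla_X X$) are equated in one line; because both sides are quadratic in the same field $X$, the antisymmetric (twist) part of $\nabla V$ never appears, and integrability is used only implicitly, through the polarization inherent in characterizing totally geodesic by $\nabla_X X$ (where $[X,Y]$ being tangent is what upgrades the diagonal statement to full invariance of $\Gamma(T\F)$ under $\nabla$). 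Your decomposition buys something the paper's argument hides: it is fully tensorial (no need to restrict to flow-invariant fields), it isolates exactly where the foliation hypothesis enters (Frobenius killing $d\omega\vert_{T\F}$), and it matches the physics vocabulary, since the vanishing antisymmetric part is precisely the twist-free condition. The paper's argument, in exchange, is shorter and more elementary, requiring neither the tensor $B$ nor the Frobenius identity $\omega\wedge d\omega=0$.
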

\begin{proof}
This is precisely the equivalence between items (1) and (2) in Lemma    \ref{Lemma: equivalent items, tot geod iff V Killing along the leaves} below. 
\end{proof}
        
This proposition allows us to give the following equivalent definition of Kundt spaces, first pointed out and adopted in \cite{boucetta}. 
	
\begin{definition}
A \textbf{Kundt space} is a Lorentzian manifold with a non-singular \textbf{geodesic and lightlike} vector field $V$, such that the orthogonal distribution $V^\perp$ is integrable, tangent to a foliation $\F$ which is \textbf{totally geodesic}.   
\end{definition}
We also introduce the following broader class
\begin{definition}
A \textbf{locally Kundt space} is a Lorentzian manifold with a codimension $1$ totally geodesic lightlike foliation $\F$. 
\end{definition}
    
We now state and prove the key lemma underlying Proposition \ref{Proposition: tot geod foliation iff trans. Riemannian} and this reformulation. 
	\begin{lemma}\label{Lemma: equivalent items, tot geod iff V Killing along the leaves}
		Let  $(M,g,V)$ be a Lorentzian manifold with a non-singular lightlike vector field $V$.  The following are equivalent:
		\begin{enumerate}
			\item The distribution $V^\perp$ is integrable, tangent to a totally geodesic foliation $\F$.
			\item The distribution $V^\perp$ is integrable. Moreover, for any leaf $F$ tangent to $V^\perp$, the local flow of any vector field collinear to $V$ preserves the degenerate Riemannian metric induced on $F$.
			\item The line field $\R V$ is parallel along any curve tangent to $V^\perp$.
			\item There exists a differential $1$-form $\alpha$ such that $\nabla_X V = \alpha(X) V$ for any $X \in \Gamma(V^\perp)$.
		\end{enumerate}
	\end{lemma}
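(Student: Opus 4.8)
The plan is to reduce all four statements to the vanishing of a single bilinear form. Write $\beta := g(V,\cdot)$; since $V$ is non-singular and $g$ nondegenerate, $\beta$ is a nowhere-vanishing $1$-form with $\ker\beta = V^\perp$, and because $V$ is lightlike one has $V\in V^\perp$ together with $(V^\perp)^\perp=\R V$. On $V^\perp$ I introduce
$$ B(X,Y):=g(\nabla_X V,\,Y), \qquad X,Y\in\Gamma(V^\perp). $$
Since $\nabla_\bullet V$ is tensorial in its lower slot, $B$ is a genuine bilinear form (and, when $\nabla_X V\in\R V$, the coefficient $\alpha$ in (4) is automatically a smooth $1$-form). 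I will show that each of (1)--(4) is equivalent to $B\equiv 0$.

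The engine of the proof consists of three pointwise identities for $X,Y\in\Gamma(V^\perp)$, all following from metric compatibility and torsion-freeness of $\nabla$. Using $g(Y,V)=0$ one gets $g(\nabla_X Y,\,V)=-B(X,Y)$. Expanding $d\beta(X,Y)=X\,g(V,Y)-Y\,g(V,X)-g(V,[X,Y])$ and cancelling the torsion term isolates the antisymmetric part, $d\beta(X,Y)=B(X,Y)-B(Y,X)$. Expanding the Lie derivative isolates the symmetric part, $(\mathcal{L}_V g)(X,Y)=B(X,Y)+B(Y,X)$. A fourth elementary identity, $(\mathcal{L}_{fV}\,g)(X,Y)=f\,(\mathcal{L}_V g)(X,Y)$ (again using $g(X,V)=g(Y,V)=0$), shows that in (2) it suffices to test the single field $V$.

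Reading off the equivalences is then immediate. Condition (4) says $\nabla_X V\in\R V=(V^\perp)^\perp$, which by definition of the orthogonal means $B(X,\cdot)=0$; condition (3) says precisely that $\nabla_X V\in\R V$ for every $X\in V^\perp$, so (3) $\Leftrightarrow$ (4) $\Leftrightarrow B\equiv0$. From the antisymmetric identity, $B\equiv 0$ forces $d\beta|_{V^\perp}=0$, whence $V^\perp$ is integrable by Frobenius; thus no separate integrability hypothesis is needed once $B$ vanishes. Granting integrability, a leaf $F$ is totally geodesic iff $\nabla_X Y\in TF=V^\perp$ for all $X,Y\in V^\perp$, i.e. $g(\nabla_X Y,V)=0$, which by the first identity is $B\equiv 0$; this gives (1) $\Leftrightarrow B\equiv0$. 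Finally, the flow of $V$ (equivalently of any $fV$) preserves the degenerate metric on the leaves iff $(\mathcal{L}_V g)|_{V^\perp}=0$, i.e. the symmetric part of $B$ vanishes; combined with integrability, which already kills the antisymmetric part, this is once more $B\equiv0$, giving (2) $\Leftrightarrow B\equiv0$.

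The computations are routine; the only genuine care goes into the degenerate geometry. One must confirm that $(V^\perp)^\perp=\R V$ so that $V$ alone detects the direction transverse to a lightlike leaf and ``tangent to $F$'' coincides with ``$g(\cdot,V)=0$'', and that the radical $\R V$ of the induced degenerate metric is exactly the direction along which (2) tests invariance. With that in place, the whole statement is the observation that conditions (1), (3) and (4) all express $B\equiv 0$, while (2) splits off the symmetric part of $B$ and Frobenius integrability splits off its antisymmetric part, so that the two together are again equivalent to $B\equiv0$.
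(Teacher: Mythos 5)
Your proof is correct, and it takes a genuinely different route from the paper's. The paper proves a cycle: (1)$\Leftrightarrow$(2) via the computation $Z(g(X,X))=-2g(Z,\nabla_X X)$ for $Z$ collinear to $V$ and $X$ Lie-dragged along $Z$; then (1)$\Rightarrow$(3) by a geometric argument --- parallel transport along a curve in a leaf preserves $T\F$, and $\R V$ is the unique lightlike line inside $T\F$ --- and finally (3)$\Rightarrow$(4)$\Rightarrow$(1) by the same elementary identities you use. You instead make every condition equivalent to the vanishing of a single tensor $B(X,Y)=g(\nabla_X V,Y)$ on $V^\perp$ (essentially the second fundamental form of $V^\perp$ detected by $V$), split into its antisymmetric part $d\beta|_{V^\perp}$, which measures non-integrability, and its symmetric part $(\mathcal{L}_V g)|_{V^\perp}$, which measures failure of leafwise metric invariance. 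This hub-shaped argument buys three things the paper's route leaves implicit: integrability is visibly automatic once (3) or (4) holds, rather than emerging only at the end of the chain (4)$\Rightarrow$(1); in (2) it suffices to test the single field $V$, by your identity $(\mathcal{L}_{fV}g)|_{V^\perp}=f\,(\mathcal{L}_V g)|_{V^\perp}$; and the symmetric/antisymmetric split is exactly the shear-plus-expansion/twist decomposition of a null congruence, so your proof simultaneously exhibits the equivalence with the classical Kundt conditions recalled in the introduction. What the paper's route buys is geometric transparency in (1)$\Rightarrow$(3): the lightlike line is parallel along the leaves because it is the unique lightlike direction in each degenerate tangent hyperplane, a fact invisible in the algebra of $B$. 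One compression in your write-up deserves a sentence: the equivalence between ``$\R V$ is parallel along curves tangent to $V^\perp$'' and ``$\nabla_X V\in\R V$ for all $X\in\Gamma(V^\perp)$'' is the standard ODE argument (if the parallel transport of $V_{\gamma(0)}$ stays in the line it equals $fV$ along $\gamma$, whence $\nabla_{\gamma'}V=-(f'/f)V$, and conversely one integrates the collinearity factor); this is precisely what the paper writes out in its step (3)$\Rightarrow$(4), and your phrase ``condition (3) says precisely that $\nabla_X V\in\R V$'' silently absorbs it.
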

	\begin{proof}
		(1) $\Longleftrightarrow$ (2):  Let $Z$ be a vector field collinear to $V$. Item (2) means that for any vector field $X$ tangent to $F$ and invariant by the flow of $Z$, $g(X,X)$ is constant along the orbits of $Z$, that is, $\nabla_Z \; g(X,X)=0$.  Let $X$ be such a vector field.
		We have 
        \begin{align*}
        Z (g(X,X)) = 2 g(\nabla_Z X, X)= 2 g(\nabla_X Z, X) = -2 g(Z, \nabla_X X),    
        \end{align*}
        where we used the facts that $[Z,X]=0$ and $g(Z,X)=0$ in the intermediate steps.  Now,  $F$ is geodesic if and only if $\nabla_X X$ is tangent to $F$ for any $X \in \Gamma(TF)$.  This is equivalent to the condition $g(Z, \nabla_X X)=0$ for all such $X$. From the previous equalities, we see that this is equivalent to $Z(g(X,X))=0$, and thus to the fact that the flow of $Z$ preserves the restriction of $g$  to $TF$. \\
		(1) $\implies$ (3):  Let $F$ be a leaf of $\F$. By assumption, the distribution $TF$  is parallel along any curve contained in $F$. Consequently, when we parallel transport a line tangent to $V$ along a curve in $F$, we obtain a lightlike line tangent to $F$. This line must be  equal to $\R V$, since $\R V$ is the unique lightlike line tangent to $F$.\\
		(3) $\implies$ (4): Let $X$ be a vector field tangent to $V^\perp$. When we parallel transport $V$ along an integral curve of $X$, we obtain a vector field collinear to $V$, i.e. of the form $fV$, where $f$ is a function along the curve.  This implies that $\nabla_X V$ is also collinear to $V$. Consequently, we can write $\nabla_X V= \alpha(X) V$, where $\alpha$ is a $1$-form given by  $\alpha:=g(\nabla V, U)$, with $U$ being a globally defined vector field such that $g(V,U)=1$.  \\
		(4) $\implies$ (1): Let $X$ and $Y$ be two vector fields tangent to the distribution $V^\perp$. We have $g(\nabla_X Y, V)=-g(Y, \nabla_X V)=-\alpha(X) g(Y,V)=0 $. This implies that $V^\perp$ is integrable and tangent to a totally geodesic foliation. 
	\end{proof}
	
In particular, Kundt spaces include Lorentzian manifolds that admit a parallel lightlike line field, or have a lightlike Killing field with an integrable orthogonal distribution.\medskip

\textbf{Convention:} From now on, we assume that the manifold, the foliation and the subfoliation are orientable.  \medskip

From Lemma   \ref{Lemma: equivalent items, tot geod iff V Killing along the leaves}, we also obtain a criterion for a codimension $1$ foliation to be lightlike, totally geodesic for some Lorentzian metric. We recall that a $C^{k}$ foliation is one for which the transition maps of the defining atlas are of class $C^{k}$. 

\begin{corollary}\label{Corollary: criterion}
A given $C^{k+1}$ foliation $\F$ on a manifold $M$ is lightlike geodesic for some $C^k$ Lorentzian metric on $M$, if there is a $1$ dimensional subfoliation $\mathcal{V}$, of class $C^{k+1}$, which is leafwise transversally Riemannian for some $C^k$ degenerate Riemannian metric on $T\F$.    
\end{corollary}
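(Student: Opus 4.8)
The plan is to construct an explicit $C^k$ Lorentzian metric $g$ for which $\F$ is lightlike with $T\F^\perp = T\mathcal{V}$, and then to invoke Proposition~\ref{Proposition: tot geod foliation iff trans. Riemannian} to upgrade this to total geodesy. First I would fix the global data supplied by the orientability convention: a defining $1$-form $\omega$ for $\F$ with $\ker\omega = T\F$, a nowhere-vanishing generator $V$ of $T\mathcal{V}$, and a transverse vector field $U$ realizing a splitting $TM = T\F \oplus \R U$, normalized so that $\omega(U)=1$. Writing $\pi\colon TM \to T\F$ for the projection along $U$, I extend the given degenerate metric $h$ on $T\F$ (whose radical is exactly $T\mathcal{V}$, since $\mathcal{V}$ is transversally Riemannian for it) to a symmetric bilinear form $h_{\mathrm{ext}}(\cdot,\cdot) := h(\pi\,\cdot, \pi\,\cdot)$ on $TM$. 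As $V$ is nowhere vanishing, I also choose a $1$-form $\beta$ with $\beta(V)=1$, for instance $\beta = g_0(V,\cdot)/g_0(V,V)$ for an auxiliary Riemannian metric $g_0$. All of these objects are of class at least $C^k$, because $T\F$ and $T\mathcal{V}$ are $C^k$ distributions (the foliations being $C^{k+1}$) and $h$ is $C^k$.

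Then I set
\begin{equation*}
g := h_{\mathrm{ext}} + \omega \otimes \beta + \beta \otimes \omega .
\end{equation*}
The core of the argument is to check that $g$ is Lorentzian. Since $\omega$ vanishes on $T\F$, one computes $g|_{T\F} = h$, so $g$ restricted to $T\F$ is positive semidefinite with radical $\R V$; moreover $g(V,X)=0$ for $X\in T\F$ and $g(V,U) = \beta(V) = 1$. Choosing an $h$-orthonormal frame $(w_1,\dots,w_{n-2})$ of a complement $W$ to $\R V$ inside $T\F$, and completing the null vector $V$ to a hyperbolic pair by replacing $U$ with a suitable combination $U - \sum_i b_i w_i - c\,V$, I would bring the Gram matrix into the block form $\left(\begin{smallmatrix} 0 & 0 & 1 \\ 0 & I & 0 \\ 1 & 0 & 0\end{smallmatrix}\right)$, exhibiting signature $(-,+,\dots,+)$. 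This simultaneously shows that $g$ is nondegenerate, hence a genuine metric, and that $T\F$ is lightlike with $T\F^\perp = \R V = T\mathcal{V}$, so that $\mathcal{V}$ is exactly the normal foliation of $\F$.

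Finally I would conclude via Proposition~\ref{Proposition: tot geod foliation iff trans. Riemannian}. The degenerate Riemannian metric that $g$ induces on $T\F$ is precisely $g|_{T\F} = h$, and by hypothesis $\mathcal{V}$ is leafwise transversally Riemannian for $h$, i.e. every vector field tangent to $\mathcal{V}$ preserves $h$ along the leaves of $\F$. The Proposition then yields that $\F$ is totally geodesic for $g$; combined with the lightlike property established above, this says that $\F$ is lightlike geodesic for $g$, as required.

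I expect the genuinely delicate points to be bookkeeping rather than conceptual: ensuring the global existence of $\omega$, $V$, $U$ and $\beta$ (which the orientability convention guarantees) and tracking regularity so that the final $g$ is $C^k$ — the loss of one derivative in passing from the $C^{k+1}$ foliations to their tangent distributions is exactly what forces the $C^k$ conclusion. The signature normalization, although the technical heart, is a finite-dimensional linear-algebra computation and poses no real obstacle; I would also remark that the construction is far from unique, since $U$ and $\beta$ can be varied freely.
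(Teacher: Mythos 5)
Your proof is correct and follows essentially the same route as the paper: both extend $h$ to a Lorentzian metric for which $T\F$ is degenerate with $T\F^\perp = T\mathcal{V}$, and then apply Proposition~\ref{Proposition: tot geod foliation iff trans. Riemannian} (i.e.\ Lemma~\ref{Lemma: equivalent items, tot geod iff V Killing along the leaves}) to get total geodesy. The only difference is presentational: the paper prescribes the metric directly on a splitting, taking a transverse field $Z$ with $g(Z,Z)=0$, $g(V,Z)=1$ and $g(E,Z)=0$ for a complement $E$ of $T\mathcal{V}$ in $T\F$, which builds in from the start the null normal form that your Gram--Schmidt normalization of $U$ recovers afterwards.
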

\begin{proof}
Let $h$ be the $C^k$ degenerate Riemannian metric on $T\F$. Define a codimension $2$ distribution $E$ on $M$ such that $E$ is tangent to $\F$ and transversal to $\mathcal{V}$.  Let $V$  be a non-singular vector field tangent to $\mathcal{V}$,  and let $Z$ be a vector field transversal to $\F$. Such objects exist by our orientability assumption, and can be defined using an auxiliary Riemannian metric on $M$. Next, define a Lorentzian metric $g$ on $M$ by setting $g=h$ on $T \mathcal{F}$, and  $g(X,Z)=0$, $g(V,Z)=1$, $g(Z,Z)=0$, for any $X \in \Gamma(E)$. Then, by Lemma   \ref{Lemma: equivalent items, tot geod iff V Killing along the leaves}, $V^\perp$ defines a lightlike totally geodesic foliation.     
\end{proof}
In dimension $3$, this yields
	\begin{corollary}[A simple criterion in dimension $3$]\label{Corollary: criterion in dimension 3}
		Let $M$ be a  $3$-dimensional manifold and $\F$ a  foliation by surfaces. 
        If $\F$ is lightlike geodesic for some Lorentzian metric on $M$, then any unit vector field tangent to $\F$ preserves the $1$-dimensional sub-foliation $\mathcal{V}$ tangent to $T\F^\perp$. Conversely, assume that $\F$ contains a $1$-dimensional subfoliation $\mathcal{V}$ of class $C^{k+1}$. If there exists a $C^k$ vector field tangent to $\F$, transversal to $\mathcal{V}$, and preserving $\mathcal{V}$, then $\F$ is a lightlike geodesic foliation for some Lorentzian metric on $M$ of class $C^k$.     
	\end{corollary}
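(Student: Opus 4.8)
The plan is to deduce both implications from the characterization already in hand --- Proposition \ref{Proposition: tot geod foliation iff trans. Riemannian} and Corollary \ref{Corollary: criterion} --- using crucially that in dimension $3$ the relevant transverse bundle has rank one. For the direct implication I would first observe that in dimension $3$ a leaf of a lightlike foliation $\F$ is a surface on which the induced degenerate metric $h$ has a $1$-dimensional kernel, namely $T\F \cap T\F^\perp = T\F^\perp$, i.e. exactly the normal foliation $\mathcal{V}$. Hence the quotient line bundle $Q := T\F/\mathcal{V}$ inherits a positive-definite metric $\langle\cdot,\cdot\rangle$. If $\F$ is lightlike geodesic, Proposition \ref{Proposition: tot geod foliation iff trans. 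Riemannian} gives that $\mathcal{V}$ is leafwise transversally Riemannian, so a generator $V$ of $\mathcal{V}$ preserves $h$ and therefore the induced metric on $Q$; concretely, for sections $\bar A, \bar B$ of $Q$ one has $V\langle \bar A,\bar B\rangle = \langle \mathcal{L}_V \bar A,\bar B\rangle + \langle \bar A,\mathcal{L}_V \bar B\rangle$, where $\mathcal{L}_V \bar A = \overline{[V,A]}$ is the (well-defined) Lie derivative on $Q$.

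Now a unit vector field $X$ tangent to $\F$ is transversal to $\mathcal{V}$ (vectors of $\mathcal{V}$ are $h$-null), and its class $\bar X$ is a unit section of $Q$. Then $0 = V\langle \bar X,\bar X\rangle = 2\langle \mathcal{L}_V \bar X,\bar X\rangle$, and since $Q$ has rank one this forces $\mathcal{L}_V \bar X = 0$, i.e. $[V,X]\in\Gamma(\mathcal{V})$. Thus $X$ preserves $\mathcal{V}$, as claimed.

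For the converse I would reduce to Corollary \ref{Corollary: criterion} by manufacturing a suitable degenerate metric on $T\F$. Let $V$ generate the $C^{k+1}$ subfoliation $\mathcal{V}$, and let $W$ be the given $C^k$ vector field tangent to $\F$, transversal to $\mathcal{V}$, and preserving $\mathcal{V}$; then $\{V,W\}$ is a leafwise frame of $T\F$. Define a $C^k$ symmetric bilinear form $h$ on $T\F$ by $h(V,\cdot)=0$ and $h(W,W)=1$; it is positive semidefinite with kernel exactly $\mathcal{V}$, hence a degenerate Riemannian metric on $T\F$. Since $W$ preserves $\mathcal{V}$, we have $[V,W]=\lambda V$ for some function $\lambda$, so $(\mathcal{L}_V h)(W,W) = -2h([V,W],W) = -2\lambda\, h(V,W)=0$, while all components of $\mathcal{L}_V h$ involving $V$ vanish because $h(V,\cdot)=0$. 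Hence $\mathcal{L}_V h = 0$, i.e. $\mathcal{V}$ is leafwise transversally Riemannian for $h$. The hypotheses of Corollary \ref{Corollary: criterion} are then met (here the transversal distribution $E$ is simply $\R W$), and it yields a $C^k$ Lorentzian metric making $\F$ lightlike geodesic.

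The point requiring the dimension hypothesis --- and the step I expect to be the true crux --- is the rigidity in the direct implication: invariance of the transverse metric only tells us that a unit vector field has $\mathcal{L}_V \bar X$ orthogonal to $\bar X$, and upgrading this to $\mathcal{L}_V \bar X = 0$ genuinely uses that $Q$ has rank one. In higher codimension one would recover only preservation of the transverse norm, not of the line field $\mathcal{V}$, so this statement is specific to surfaces in a $3$-manifold. The remaining points are routine: checking that $\mathcal{L}_V$ descends to $Q$ (which holds because $\mathcal{V}$ is $1$-dimensional, hence involutive), and tracking regularity, where $V\in C^{k+1}$ and $W\in C^k$ make $h$ of class $C^k$, matching Corollary \ref{Corollary: criterion}.
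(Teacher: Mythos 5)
Your proof is correct, and it shares the paper's overall skeleton: both directions are reduced to Proposition \ref{Proposition: tot geod foliation iff trans. Riemannian} and Corollary \ref{Corollary: criterion} through the equivalence ``$\mathcal{V}$ is leafwise transversally Riemannian $\Longleftrightarrow$ there exists a vector field tangent to $\F$, transversal to $\mathcal{V}$, preserving $\mathcal{V}$''. Where you genuinely diverge is in how that equivalence is proved. The paper argues synthetically: it passes to the leafwise semi-distance $d_h$ and uses that a unit field $X$ satisfies $d_h(p,\phi_X^s(p))=s$, so invariance of $d_h$ under $\mathcal{V}$ is the same as the flow of $X$ carrying $\mathcal{V}$-leaves to $\mathcal{V}$-leaves. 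You argue infinitesimally: on the rank-one quotient $Q=T\F/T\mathcal{V}$, the identity $0=V\langle \bar X,\bar X\rangle=2\langle \mathcal{L}_V\bar X,\bar X\rangle$ forces $\mathcal{L}_V\bar X=0$, and conversely $[V,W]=\lambda V$ gives $\mathcal{L}_V h=0$ for your explicit $h$ (which is the same metric the paper uses implicitly, namely the one making $W$ unit with radical $T\mathcal{V}$). Your route is shorter and isolates cleanly where dimension $3$ enters (rank-one rigidity: a vector orthogonal to a unit section of a line bundle vanishes), and your closing remark that this fails in higher transverse rank is exactly right. What the paper's formulation buys in exchange is low regularity: the semi-distance argument uses only flows and lengths of curves, so it makes sense for continuous data, whereas your Lie-bracket computations require the fields involved to be differentiable --- in particular your converse needs $k\geq 1$, and your direct implication tacitly assumes the arbitrary unit field $X$ is $C^1$. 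This is not idle generality on the paper's part, since the surrounding sections (Anosov flows with merely $C^0$ stable/unstable distributions, and the low-regularity Kundt structures) rely on precisely this metric-space phrasing. Two small points to tidy in your write-up: Proposition \ref{Proposition: tot geod foliation iff trans. Riemannian} asks that \emph{every} field tangent to $\mathcal{V}$ preserve $h$, not just your chosen generator $V$ --- this follows at once from $\mathcal{L}_{fV}h=f\,\mathcal{L}_V h$ modulo terms killed by $h(V,\cdot)=0$, but it should be said; and in the direct implication it is worth noting that once one $C^1$ unit field preserves $\mathcal{V}$, every other unit field does too, being of the form $\pm X+fV$.
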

	
\begin{proof}
Let $F$ be a surface with a one dimensional foliation $\mathcal{V}$. Let $h$ be a degenerate Riemannian metric on $F$ with radical $T \mathcal{V}$, and let $X$ be a unit vector field tangent to $F$.  We claim that $\mathcal{V}$ preserves $h$  (i.e. the local flow of any vector field tangent to $\mathcal{V}$ preserves $h$) if and only if $X$ preserves $\mathcal{V}$. To see this,  denote by $\phi_X^t$  the flow of $X$, and by $d_h$ the semi-distance associated to $h$.  The metric $h$ is $\mathcal{V}$-invariant if and only if $d_h$ is $\mathcal{V}$-invariant, which in turn is equivalent to the following property $(\mathfrak{P})$: 
`for any $\mathcal{V}$-leaf $l_0$, and any two pairs of points $(p,q)$ and $(p',q')$ such that $p$, $p' \in l_0$, the distances $d_h(p,q)$ and $d_h(p',q')$ are equal if and only if $q$ and $q'$ are on the same $\mathcal{V}$-leaf'. 
Let $p \in F$ and $q:=\phi_X^s(p)$. Since $X$ is a unit vector field, the distance $d_h(p, q)=s$ is equal to the time of the flow between the two points. Consequently,  
for any two points $p, p'$ on the same $\mathcal{V}$-leaf $l_0$,  $d_h(p,\phi_X^{s_0}(p))=d_h(p',\phi_X^{s_1}(p'))$ if and only if $s_0=s_1$. Assume now that $h$ is $\mathcal{V}$-invariant. Then, for the distances above to be equal, we must have  $\phi_X^{s_0}(p)$ and $\phi_X^{s_0}(p')$  on the same $\mathcal{V}$-leaf, and this holds for any initial $\mathcal{V}$-leaf $l_0$ and 
$p$, $p' \in l_0$. 
This means that the flow of $X$ maps any $\mathcal{V}$-leaf to a  $\mathcal{V}$-leaf, i.e. that $X$ preserves $\mathcal{V}$. Conversely, if $X$ preserves $\mathcal{V}$, then any other unit vector field also preserves $\mathcal{V}$, since it is necessarily of the form $X+f V$ for some function $f$ and some vector field $V$ tangent to $\mathcal{V}$. Thus, the property $(\mathfrak{P})$ above is satisfied, which means  that the semi-distance $d_h$  is $\mathcal{V}$-invariant. This proves our claim. 
From this claim, it follows that $\mathcal{V}$ is leafwise transversally Riemannian if and only if there exists a vector field tangent to $\F$, transversal to $\mathcal{V}$, and preserving $\mathcal{V}$. 
With this in hand, the corollary is simply a reformulation in dimension $3$ of Proposition \ref{Proposition: tot geod foliation iff trans. Riemannian} and Corollary \ref{Corollary: criterion}.
\end{proof}

	\subsection{Examples}\label{Section. Examples}
	Explicit examples of (locally) Kundt spaces are given here. More examples can be found in Section \ref{Section: Local vs Global}. 
	\begin{example}[Minkowski space]\label{EX: Minkowski}
		The Minkowski space is a Kundt space: any  foliation by lightlike hyperplanes  is a lightlike totally geodesic foliation. 
	\end{example}
	
	\begin{example}[Anti-de Sitter space]\label{EX: AdS}
    		Denote by $\R^{2,n+1}$, $n \geq 1$, the real vector space $\R^{3+n}$ equipped with the quadratic form: $$q:=2 du_1 dv_1 +2 du_2dv_2 + dx_1^2 + \ldots + dx_{n-1}^2$$ of signature $(2,n+1)$. Then we define
$$\AdS^{2+n} := \{x \in \R^{2,n+1} \vert \; q(x) = -1\}.$$
            It is immediate to check that $\AdS^{2+n}$ is a smooth connected submanifold of $\R^{2,n+1}$ of dimension $n+2$. The tangent space $T_x \AdS^{2+n}$ regarded as a subspace of $\R^{3+n}$ coincides with the orthogonal space $x^\perp = \{y \in \R^{n+3} \vert \; q(x, y) = 0\}$. The restriction of the quadratic form $q$ to $T \AdS^{2+n}$ has Lorentzian signature, making $\AdS^{2+n}$ a Lorentzian manifold. This defines the so-called quadric model of anti-de Sitter space of dimension $n+2$. 
          
            The lightlike vector field $V:=u_2 \partial_{v_1} - u_1 \partial_{v_2}$ acts on $\R^{2,n+1}$ by the one-parameter group of diffeomorphisms $f_t\left(u_1,v_1,u_2,v_2,x) =(u_1, v_1 + t u_2,\, u_2,\, v_2 - t u_1, x \right)$, where $x:=(x_1,\ldots,x_{n-1})$. It is easy to see that these diffeomorphisms preserve $q$, so $V$ is a lightlike Killing field for $q$. Moreover, $V$ is tangent to $\mathsf{AdS}^{2+n}$. So, $V$ is also a (lightlike) Killing vector field  of $\mathsf{AdS}^{2+n}$. The distribution $V^\perp$ of $\R^{3+n}$ is integrable, since it is generated by the vector fields $\partial_{v_1}$, $\partial_{v_2}$, $\partial_{x_1}, \ldots, \partial_{x_{n-1}}$, and $u_1 \partial_{u_1} + u_2 \partial_{u_2}$. Thus, the distribution $E:=V^\perp \cap T \mathsf{AdS}^{2+n}$ on the anti-de Sitter space is integrable, tangent to a foliation $\F$. 
            This foliation is lightlike, totally geodesic, and has codimension one in $\mathsf{AdS}^{2+n}$.  
            
        We now express the metric of $\AdS^{2+n}$  in a system of adapted coordinates. This will allow us to define the so-called Siklos spaces in the next section, and see them as  locally Kundt spaces that generalize the anti-de Sitter space.   Consider the half-space $H^{2+n}_-:=\mathsf{AdS}^{2+n} \cap \{u_1 <0\}$, and define the diffeomorphism $\varphi^-: \,\R^{2+n}_{y_n >0}\, \to H^{2+n}_-$, that sends $\left( u,v,y,y_n \right) \in \R^{2+n}_{y_n >0}$ to $\left(u_1,v_1,u_2,v_2,x\right) \in H^{2+n}_-$, with 
            \begin{align*}
                 \left(u_1,v_1,u_2,v_2,x\right)=y_n^{-1} \left(- 1,  (y_n^2+  \parallel y \parallel^2 + uv), u , v , y \right),
            \end{align*}
        where  $y:=\left(y_1,\ldots, y_{n-1}\right)$, and $\parallel y \parallel ^2:= \sum_{i=1}^{n-1} y_i^2$.  
         Under this map, the pull back of the $\AdS^{2+n}$ metric takes the form
         \begin{equation}\label{Eq: metric anti-de sitter}
          g^\AdS= (y_n)^{-2}(2du dv + dy_1^2 +\ldots + dy_{n}^2).
         \end{equation}
        Similarly, we can define $\varphi^+: \R^{2+n}_{y_n <0} \to H^{2+n}_+ := \mathsf{AdS}^{2+n} \cap \{u_1 >0\}$. 
        These coordinates $(\ref{Eq: metric anti-de sitter})$ are known as the Poincar\'e coordinates on the (half) anti-de Sitter space. The lightlike hyperplanes $u=$constant form a lightlike totally geodesic foliation,  denoted by $\mathfrak{F}$. Indeed, as it appears from (\ref{Eq: metric anti-de sitter}), its normal foliation, defined by the lightlike vector field $\partial_v$, is transversally Riemannian on each leaf. In particular, the transversal metric induced on each leaf is the hyperbolic metric.    
        Moreover, since $\partial_v$ is mapped by the diffeomorphisms $\varphi^{\pm}$ on $V$, the image of $\mathfrak{F}$ by $\varphi^-$ (resp. $\varphi^+$) coincides with the restriction of the lightlike, totally geodesic foliation $\F$ to the open subset $H^{2+n}_-$ (resp. $H^{2+n}_+$) of $\mathsf{AdS}^{2+n}$. 

        A final observation  in this example is that the foliation on $H^{2+n}_-$ extends to a foliation on the manifold with boundary $\overline{H^{2+n}_-}=H^{2+n}_- \cup \partial H^{2+n}_-$, where the boundary $\partial H^{2+n}_-$ is a leaf of the extended foliation. This phenomenon is not specific to this case but holds more generally, at least locally. Indeed, an important property, shown in  \cite{zeghib-lipschitz}, is that codimension $1$ geodesic foliations of manifolds with a $C^1$ affine connection are locally Lipschitz. As observed there, given such a manifold $Y$, and a foliation $\F$ defined on a subset $X \subset Y$, this property allows to extend $\F$ as a foliation of the closure $\overline{X} \cap U_p$, for some (convex) neighborhood $U_p$ of any point $p \in \partial X$. In this extended foliation, the boundary $\partial X \cap U_p$ becomes a boundary leaf.  
	\end{example}

    \begin{example}[Suspensions]\label{EX: suspensions} Let $(N,\mathcal{S})$ be a Riemannian manifold with a codimension $1$ foliation $\mathcal{S}$. Let $f$ be a diffeomorphism of $N$ that preserves both $\mathcal{S}$ and the induced Riemannian metric on $\mathcal{S}$. The maps $(x,t) \mapsto (f^n(x), t+n)$, $n \in \Z$, define a proper and free action of $\Z$ on the product $N \times \R$. Let $M$ denote the quotient manifold of $N \times \R$ by this action, i.e. $M:=N \times \R/(x,t) \sim (f(x),t+1)$.  This is called the suspension manifold of $f$. The vector field $\partial_t$ on $N \times \R$ is invariant under this action, so it determines a nowhere vanishing vector field $V$ on $M$. The flow of $V$ is called the suspension flow of $f$, and defines a foliation which we denote by  $\mathcal{V}$. 
    Now, define $\F$ as the saturation of $\mathcal{S}$ by $\mathcal{V}$. By Corollary \ref{Corollary: criterion}, $\F$ is lightlike geodesic for some Lorentzian metric on $M$, with $\mathcal{V}$ as a normal foliation. Note in particular that this metric is not a product of metrics on $N$ and $\R$.   
	\end{example}
	
\begin{example}[A torus bundle over the circle]\label{EX: Solvable FG}
Let $q$ be a Lorentzian quadratic form on $\R^2$, and consider the flat Lorentzian torus $(\T^2 = \R^2/\Z^2,q)$.  Let  $h \in \O_\Z(q) := \O(q) \cap \GL(2,\Z)$ be a hyperbolic matrix,
i.e. $h$ is real diagonalizable with eigenvalues different from $\pm 1$. The matrix $h$ naturally defines an action on $\T^2$. 
Consider the suspension manifold of $h$, defined as $M:=\T^2 \times \R/ (x, t) \sim (h(x), t+1)$. Endow $M$ with the Lorentzian (flat) metric $g$ induced from the product metric $q + dt^2$ on $\T^2 \times \R$. Let $e_1$ be an eigenvector of $h$. It  determines a foliation $\mathcal{V}$ on $\T^2$, and $h$ acts as a diffeomorphism of $\T^2$ that preserves this foliation. Define $\F$ as the saturation of $\mathcal{V}$ by the suspension flow of $h$. 
The foliation $\F$ is lightlike and totally geodesic with respect to the metric $g$, with $\mathcal{V}$ as a normal foliation.    
\end{example}

\begin{example}[Compact quotients of $\AdS^3$]\label{Ex: non-solvable FG} Consider the one-parameter subgroups of $\SL(2,\R)$ given by
		\begin{align*}
			d^t= \begin{pmatrix}
				e^t & 0 \\
				0 & e^{-t}
			\end{pmatrix}, \;
			h^t= \begin{pmatrix}
				1 & 0 \\
				t & 1
			\end{pmatrix}.    
		\end{align*}
We will give, for $n=1$, another description of the Kundt structure on $\AdS^{2+n}$ described in  Example \ref{EX: AdS}, and obtain such a structure also on (compact) quotients of $\AdS^{3}$. 
There is a special model of anti-de Sitter space in dimension $3$ which naturally endows it with a Lie group structure. To construct this model, consider the vector space $\mathcal{M}(2, \R)$ of $2 \times 2$ real matrices. Then, $q := - \mathsf{det}$ is a quadratic form of signature $(2, 2)$, so there is an isomorphic identification of $(\mathcal{M}(2,\R),-\mathsf{det})$ with $\R^{2,2}:=(\R^{2+2},2du_1dv_1+2du_2dv_2)$ (unique up to composition by an element in $\O(2, 2)$). Under this isomorphism, the anti-de Sitter space $\AdS^3$ is identified with the Lie group $\SL(2, \R)$. 

We now determine the Lorentzian metric induced by this identification. The tangent space at the identity element $\text{I}_2$ of $\SL(2,\R)$ is   identified with the Lie algebra  of $\SL(2,\R)$, given by $\sl(2,\R):=\left\{ A=\begin{pmatrix}
			a & b \\
			c & -a
\end{pmatrix}; a, b, c \in \R \right\}$. The  quadratic form $q$ induces a quadratic form $\bar{q}$ on $\sl(2,\R)$ given by $\bar{q}(A)=-\det(A)=a^2 - bc$. This form has Lorentzian signature. 
The group $\SL(2, \R) \times \SL(2, \R)$ acts linearly on $\mathcal{M}(2, \R)$ by left and right multiplication:
$$(A,B) \cdot X := AXB^{-1}.$$  This action preserves both 
the quadratic form $q = - \mathsf{det}$ and the subspace $\SL(2, \R)$. Thus,  it induces an isometric action on $\SL(2, \R)$ endowed with the  Lorentzian metric induced by $q$, and this action corresponds to left and right multiplications. Therefore, with this model, one has a natural identification of $\AdS^3$  with $(\SL(2, \R), g_{\AdS})$, where  $g_{\AdS}$ is the Lorentzian metric obtained by left-translating the quadratic form $\bar{q}$ on $\sl(2,\R)$. This metric is also right-invariant. In particular, the right action of 
$h^t$ on $\SL(2,\R)$ generates a lightlike Killing field on it, which, up to applying an element of $\O(2,2)$ on $\R^{2,2}$, coincides with the one described in Example \ref{EX: AdS}. 

Now, for any uniform lattice $\Gamma$ in $\SL(2, \R)$, we have an induced Lorentzian metric  on the quotient manifold $X:=\Gamma \textbackslash \SL(2,\R)$, on which $\SL(2, \R)$ acts isometrically on the right. 	The right action of $h^t$ generates a lightlike Killing field on $X$, and its orthogonal distribution defines a lightlike, totally geodesic foliation $\F$. 
This foliation is given by the right action of $D \ltimes H$ on $X=\Gamma \backslash \SL(2,\R)$, where $D$ and $H$ are the one-parameter subgroups of $\SL(2,\R)$ defined by $D:=\{d^t, t \in \R\}$, $H:=\{h^t, t \in \R\}$.
\end{example}

\subsection{Actions of Lie groups}
Certain cohomogeneity $1$ actions of Lie groups induce codimension $1$ foliations that can be made lightlike and totally geodesic for some Lorentzian metric.  Some of the examples discussed in the previous paragraph are of this type:
\begin{itemize}
    \item In Example \ref{EX: suspensions}, let $N$ be a $2$-dimensional torus, and let $V$ denote the vector field that generates the suspension flow. Consider $X$, a unit vector field tangent to $\mathcal{S}$. Since $f$ preserves both $\mathcal{S}$ and the Riemannian metric induced on it,  $X$ is preserved by $f$. Define a vector field $\hat{X}$ on $N \times \R$  by setting $\hat{X}(x,t)=X(x)$. This vector field  commutes with the horizontal  translation vector field $\partial_t$.  Since $X$ is preserved by $f$, $\hat{X}$ descends to  the quotient $M$,  defining a non-vanishing vector field  on $M$ that is tangent to $\F$, transversal to $\mathcal{V}$, and commutes with  $V$. Hence, the foliation  $\F$ is induced by a (locally free) action of $\R^2$. 
    \item In contrast to the previous example, the gluing diffeomorphism   $h$ in Example \ref{EX: Solvable FG} preserves a $1$-dimensional foliation of $\T^2$ but it does not preserve a parametrization of its leaves.  In this example, the foliation $\F$ is defined by a locally free action of  the affine group.  To see this, let $h^t$ be a one-parameter group of hyperbolic matrices in $\mathsf{O}(q)$ such that $h^1 = h$. Let $e_1$ and $e_2$ be  two distinct eigenvectors of $h$.  We have $h(e_1)= \lambda e_1$, \, $h(e_2)=\lambda^{-1} e_2$, for some $\lambda \in \R^*$, $\lambda \neq \pm 1$. 
    Then, for all $t \in \R$, the matrices $h^t$ have  the same eigenvectors, and we have $h^t(e_1) = \lambda^t e_1$,\, $h^t(e_2) = \lambda^{-t} e_2$.  As seen in Example \ref{EX: Solvable FG},  the choice of an eigenvector determines a $2$-dimensional foliation $\F$ on $M$, which is tangent to the direction field generated by the chosen eigenvector on $\T^2$ and to the suspension flow of $h$. Fix an eigenvector, say $e_1$, and define two vector fields on $\T^2 \times \R$ by
    \begin{align*}
    	X(x,t)= \lambda^t e_1,\;\; Y(x,t)= \frac{1}{\ln \lambda} \partial_t.
    \end{align*}
   Both descend to the  quotient $M$, and they are tangent to $\F$. Moreover, they satisfy the Lie bracket relation $[Y,X]=X$.  Therefore, these vector fields generate the Lie algebra of the affine group. Since they are complete, they induce a locally free action of the affine group, defining the foliation $\F$. 
    \item In Example \ref{Ex: non-solvable FG}, the right action of $D \ltimes H$ on $X=\Gamma \backslash \PSL(2,\R)$ is locally free, and defines the foliation $\F$. This group is isomorphic to the affine group. 
\end{itemize}

These examples are particular cases of the following general situation. Let $G$ be a Lie group acting locally freely on a manifold $M$ with codimension $1$ orbits. Assume that $G$ has a normal, connected, $1$-dimensional subgroup $H$. Denote by $\F$ the orbit foliation of $G$ and by $\mathcal{V}$ the subfoliation corresponding to $H$. Then

\begin{proposition}\label{Prop: actions of Lie groups}
There exists a Lorentzian metric on $M$ such that $\F$ is lightlike and totally geodesic, with $\mathcal{V}$ as normal foliation.    
\end{proposition}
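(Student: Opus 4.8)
The plan is to reduce the whole statement to Corollary \ref{Corollary: criterion}: it suffices to exhibit on the bundle $T\F$ a degenerate Riemannian metric whose radical is $T\mathcal{V}$ and for which $\mathcal{V}$ is leafwise transversally Riemannian, since the Lorentzian metric making $\F$ lightlike and totally geodesic with $\mathcal{V}$ as normal foliation is then produced by that corollary. Write $\g$ for the Lie algebra of $G$ and $\h \subset \g$ for the one-dimensional subalgebra corresponding to $H$; because $H$ is normal, $\h$ is an ideal of $\g$. Since the action is locally free with codimension $1$ orbits we have $\dim G = \dim M - 1$, and for each $\xi \in \g$ the fundamental vector field $\widehat{\xi}$ on $M$ has the property that $\xi \mapsto \widehat{\xi}(x)$ is a linear isomorphism from $\g$ onto $T_x\F$ at every $x$. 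The fundamental fields thus form a global frame of $T\F$ (so $T\F$ and $T\mathcal{V}$ are orientable, as needed to apply the corollary), and $\mathcal{V}$ is exactly the one-dimensional orbit foliation of $H$, with $T\mathcal{V}$ spanned by $\widehat{\zeta_0}$ for a generator $\zeta_0$ of $\h$.

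I would then transport an algebraic datum from $\g$ to $T\F$. Choose a complement $\m$ of $\h$ in $\g$, equip $\m$ with any positive definite inner product, and extend it by zero to a positive semidefinite symmetric form $\bar{q}$ on $\g$ whose radical is exactly $\h$. Using the frame above, define a degenerate metric $h$ on $T\F$ by
\begin{equation*}
h(\widehat{\xi}, \widehat{\eta}) := \bar{q}(\xi, \eta), \qquad \xi, \eta \in \g.
\end{equation*}
This is well defined and smooth because the $\widehat{\xi}$ trivialize $T\F$, it is positive semidefinite, and its radical at each point is $\{\widehat{\xi}(x) : \xi \in \h\} = T_x\mathcal{V}$, as required.

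The one point that genuinely needs checking is that $\mathcal{V}$ is leafwise transversally Riemannian for $h$, i.e. that the flow of every vector field tangent to $\mathcal{V}$ preserves $h$ on $T\F$. Any such field is of the form $\varphi\,\widehat{\zeta_0}$, and since $\widehat{\zeta_0}$ lies in the radical of $h$ one has $(L_{\varphi\widehat{\zeta_0}}h)(X,Y) = \varphi\,(L_{\widehat{\zeta_0}}h)(X,Y)$ for $X,Y \in \Gamma(T\F)$, which reduces the claim to $L_{\widehat{\zeta_0}}h = 0$ on $T\F$. Evaluating on frame fields,
\begin{equation*}
(L_{\widehat{\zeta_0}}h)(\widehat{\xi},\widehat{\eta}) = \widehat{\zeta_0}\big(\bar{q}(\xi,\eta)\big) - h\big([\widehat{\zeta_0},\widehat{\xi}],\widehat{\eta}\big) - h\big(\widehat{\xi},[\widehat{\zeta_0},\widehat{\eta}]\big),
\end{equation*}
the first term vanishes because $\bar{q}(\xi,\eta)$ is constant, and since $[\widehat{\zeta_0},\widehat{\xi}] = \pm\,\widehat{[\zeta_0,\xi]}$ (the sign, depending on the left/right convention, being immaterial) the remaining terms equal $\mp\,\bar{q}([\zeta_0,\xi],\eta) \mp \bar{q}(\xi,[\zeta_0,\eta])$. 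This is where normality enters: $\h$ being an ideal gives $[\zeta_0,\xi],\,[\zeta_0,\eta] \in \h$, and $\h$ is precisely the radical of $\bar{q}$, so both terms vanish. Hence $L_{\widehat{\zeta_0}}h = 0$, $\mathcal{V}$ is leafwise transversally Riemannian, and Corollary \ref{Corollary: criterion} yields the desired Lorentzian metric. I expect the computation of $L_{\widehat{\zeta_0}}h$ to be the only substantive step, and its apparent obstacle — the $\ad_{\zeta_0}$-invariance of $\bar{q}$ — to dissolve precisely because the ideal $\h$ coincides with the radical of $\bar{q}$.
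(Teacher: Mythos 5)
Your proof is correct and takes essentially the same route as the paper: both construct a degenerate Riemannian metric on $T\F$ via the fundamental vector fields with the $H$-direction as radical (your $\bar{q}$, built from a complement $\m$ of $\h$, is exactly the paper's choice of a basis $X_0,\dots,X_d$ with $X_1,\dots,X_d$ orthonormal and $X_0$ null), both use normality of $H$ so that the brackets with $\zeta_0$ land in the radical, and both conclude by Corollary \ref{Corollary: criterion}. Your explicit Lie-derivative computation and the reduction from $\varphi\,\widehat{\zeta_0}$ to $\widehat{\zeta_0}$ merely spell out details the paper leaves implicit.
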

\begin{proof}
Let $X _0, \ldots , X _d$ be a basis of the Lie algebra of $G$, with $X_0$ corresponding to $H$. They determine fundamental vector fields $X_0,\ldots,X_d$ on $M$, which span $T\F$. On $T\mathcal{F}$, consider the degenerate Riemannian metric defined by: $\langle X_0,X_i \rangle= 0$ for all $i$, and $\langle  X_i,X_j \rangle= \delta_{ij}$ for $i, j \neq 0$. Since $H$ is normal, any bracket $[X_0,X_i]$ is a multiple of $X_0$. Consequently, the flow of $X_0$, corresponding to the action of $H$, maps $X_i$ to a vector field of the form $X_i + f X_0$, for some function $f$. This shows that $\mathcal{V}$ preserves the degenerate Riemannian metric.  The proposition then follows directly from Corollary \ref{Corollary: criterion}. 
\end{proof}

\begin{example}
In addition to Examples \ref{EX: suspensions} (with $N=\T^2$), \ref{EX: Solvable FG} and \ref{Ex: non-solvable FG}, Proposition \ref{Prop: actions of Lie groups} implies that any foliation of a $3$-dimensional manifold defined by a locally free action of $\R^2$ or $\Aff(\R)$ can be made lightlike and totally geodesic with respect to some Lorentzian metric.     
\end{example}

\begin{example}[Oscillator group]\label{EX: oscillator group}
Let $\Heis_3$ denote the $3$-dimensional Heisenberg group, whose Lie algebra $\heis_3 = \C \oplus \R$ is generated by $X, Y, Z$,  with the only non-trivial Lie bracket $[X,Y]=Z$. Denote by $C$ the center of $\Heis_3$, whose Lie algebra is generated by $Z$. 
Consider a semi-direct product $G_\rho := \S^1 \ltimes_\rho \Heis_3$, where $\S^1$ acts on $\Heis_3$ via a morphism $\rho: \S^1 \to \Aut(\Heis_3)$. 
Define a Lorentzian scalar product on the Lie algebra $\g_\rho$  as follows: 
\begin{itemize}
    \item $\langle \;\;,\;\; \rangle:$ the usual Euclidean scalar product on $\C = \R^2$, 
    \item $\langle T,T \rangle = \langle Z,Z\rangle = 0,\langle T,Z\rangle = 1$,
    \item $\R T \oplus \R Z  \perp \R^2$.
\end{itemize}
Consider the left-invariant Lorentzian metric on $G_\rho$ induced by this scalar product. 
The right action of $\Heis_3$ on $G_\rho$ defines a left-invariant codimension $1$ foliation $\F$ of $G_\rho$.  In particular, the $1$-dimensional  foliation $\mathcal{V}$,  generated by the right action of $C$ (the center of $\Heis_3$) is  also left-invariant and therefore lightlike. 
On the other hand, since $C$ is a normal subgroup of $G_\rho$, the  foliation $\mathcal{V}$ coincides with the one defined by the left action of $C$. Indeed, for any $x \in G_\rho$, we have $ x C x^{-1}= C$, hence $xC=Cx$. So the left action of $C$ generates a vector field tangent to $\mathcal{V}$. This vector field  preserves the degenerate Riemannian metric induced on any leaf of $\F$. 
Therefore, the foliation $\mathcal{V}$ is transversally Riemannian on each leaf of $\F$. 
Consequently, $\F$ is lightlike, totally geodesic, with normal foliation given by $\mathcal{V}$. 
In the special case where the $\rho$-action of $\S^1$ on $\Heis_3$ is trivial on the center of $\Heis_3$ and acts by rotation on the $\C$-factor, the resulting Lorentzian group $G_\rho$ has a bi-invariant symmetric Lorentzian metric, and is known as the \textbf{oscillator group}. It is a special example of a \textbf{Cahen-Wallach space} (see Par. \ref{Par. CW space}). These spaces are important in the study of symmetric Lorentzian manifolds.
\end{example}
    
\section{Anosov flows and Kundt spaces of low regularity}
	\subsection{From Anosov flows to locally Kundt structures}The last two examples in the previous paragraph belong to a more general family of foliations, arising as the weak stable foliation of an Anosov flow.
	Recall that a non-singular flow $\phi^t$ of class $C^\infty$
	on a compact $3$-manifold $M$ is called Anosov  if there exists a decomposition of the tangent bundle $TM$ into a direct sum of three rank-$1$ subbundles, denoted by $E^s$, $E^u$ and $E$, such that the following properties are satisfied:
	\begin{enumerate}
		\item $E$ is the line bundle of the flow,
		\item $E^s$ and $E^u$ are invariant under $\phi^t$,
		\item The stable distribution $E^s$	is uniformly contracted by $\phi^t$, and the unstable distribution $E^u$	is uniformly expanded by $\phi^t$. 
	\end{enumerate}
	The plane fields $E \oplus E^s$ and $E \oplus E^u$ define, respectively, what is called the weak stable and weak unstable foliations of the Anosov flow. So  the weak stable  foliation contains a $1$-dimensional subfoliation $\N^s$, which is tangent  to the stable distribution $E^s$. Similarly, the weak unstable foliation contains a subfoliation $\N^u$ tangent to $E^u$, and the Anosov flow, which is tangent to the weak stable and the weak unstable foliations, preserves both $\N^s$ and $\N^u$. This structure suggests that one could apply Corollary \ref{Corollary: criterion in dimension 3} to conclude that the weak stable and weak unstable foliations of an Anosov flow are lightlike geodesic for some Lorentzian metric on $M$.  However, this argument faces a critical issue of regularity: for general $(C^\infty)$ Anosov flows, the distributions $E^s$ and $E^u$ are only $C^0$ (see \cite{ghys_flotsAnosov}). In fact, it is shown in \cite[Theorem 7]{zeghibgeo} that if the weak stable foliation of an Anosov flow on a compact $3$-manifold is lightlike geodesic for some $C^\infty$ metric, then, up to finite cover, it is $C^\infty$ diffeomorphic to the weak stable foliation of an algebraic Anosov flow, i.e. up to finite cover, the flow is either the geodesic flow of a compact hyperbolic surface $\Sigma$, acting on its unitary tangent bundle $T^1 \Sigma$, or it is the suspension of a hyperbolic linear diffeomorphism of the $2$-torus. The last two examples mentioned in Par. \ref{Section. Examples} are precisely of this algebraic type:
	\begin{itemize}
		\item In Example \ref{EX: Solvable FG}, the flow of translations $\phi^t(x,s)=(x, s+t)$ acts on $M=\T^2 \times \R/(x,s) \sim (h(x),s+1)$ as an Anosov flow. It is the suspension flow of a hyperbolic linear diffeomorphism $h$ of $\T^2$. This diffeomorphism preserves two $1$-dimensional foliations on $\T^2$. And the weak stable and unstable foliations on $M$ are defined by saturating these  foliations   with the suspension flow.  
		\item In Example \ref{Ex: non-solvable FG}, the foliation $\F$ is given by the right action of $D \ltimes H$ on $X=\Gamma \backslash \SL(2,\R)$, where $D$ and $H$ are the one-parameter subgroups of $\SL(2,\R)$ defined by $D:=\{d^t, t \in \R\}$, $H:=\{h^t, t \in \R\}$.  An easy computation gives $$d^t h^s d^{-t} = h^{\exp(-2t)s}  \text{\;\;and\;\;} d^t (h^s)^\top d^{-t} = (h^{\exp (2t) s})^\top.$$ 
		This shows that $d^t$ acts as an Anosov flow on $X$. The weak stable foliation is given by $\F$, and the weak unstable foliation is defined by the right action of $D \ltimes H^\top$ on $X$, where $H^\top :=\{(h^t)^\top, \,t \in \R\}$. Algebraically, the unit tangent bundle of the $2$-hyperbolic space is identified with the group $\SL(2,\R)/\{\pm \text{I}_2\}=\PSL(2,\R)$. 
       One can assume that $\Gamma$ is torsion-free, up to passing to a finite index subgroup. This ensures that the (left) action of $\Gamma$ on the homogeneous space $\SL(2,\R)/\SO(2)$ is proper and free. Since this homogeneous space is identified with the $2$-hyperbolic space $\H^2$, the quotient $\Gamma \backslash \SL(2,\R)/\SO(2)$ defines a hyperbolic surface $\Gamma \backslash \H^2$. Algebraically, the unit tangent bundle of the $2$-hyperbolic space is identified with the group $\SL(2,\R)/\{\pm \text{I}_2\}=\PSL(2,\R)$, so the unit tangent bundle of the hyperbolic surface $\Gamma \backslash \H^2$ is identified with $\Gamma \textbackslash \SL(2,\R)$. The flow $d^t$ introduced above corresponds precisely to the geodesic flow on the unit tangent bundle of the hyperbolic surface $\Gamma \textbackslash \H^2$ (for a proof, see, for instance, \cite[Chap. 9, Par. 9.2]{ergodic}).
	\end{itemize}
	
	As mentioned above, in both examples, the stable and unstable distributions are $C^\infty$. In particular, the existence of a Kundt structure on them also follows from Corollary \ref{Corollary: criterion in dimension 3}. Observe that the Lorentzian metric yielding a Kundt structure is not unique.  
	
	\subsection{Kundt spaces of low regularity}
	The definition of a Kundt space involves a Lorentzian metric and a codimension $1$ lightlike foliation $\F$ that is totally geodesic. To consider totally geodesic foliations, a $C^1$ metric is required. However, the property that there exists a $1$-dimensional subfoliation $\mathcal{V}$, whose restriction to each leaf is transversally Riemannian, only requires a $C^0$ metric. This property can be formulated as follows: for any points $p_1, p_2, q_1, q_2$ on a leaf of $\F$, and for any two curves $\gamma_1$ and $\gamma_2$ joining $p_1$ to $p_2$ and $q_1$ to $q_2$ respectively, if $p_1$ and $q_1$ lie on the same $\mathcal{V}$-leaf, then $\gamma_1$ and $\gamma_2$ have the same length if and only if  $p_2$ and $q_2$ lie on the same $\mathcal{V}$-leaf. 
	This allows us to define a low-regularity locally Kundt structure as a manifold with a low-regularity Lorentzian metric (which may be $C^0$), admitting a codimension $1$ lightlike foliation $\F$ with the aforementioned property.
	
	\begin{remark}[Hyperbolic $3$-manifolds]\label{Remark: weak stable foliation Anosov}	
    In \cite{foulon}, the authors construct many examples of hyperbolic $3$-manifolds admitting Anosov flows with low regularity for their  stable and unstable foliations (see \cite[Theorem 6.2]{foulon}), giving rise to low regularity Kundt structures.
	\end{remark}

	\section{Local vs Global}\label{Section: Local vs Global}
	\subsection{Adapted coordinates}
	Given a Lorentzian manifold $(M,\F)$ with a codimension $1$ lightlike  foliation $\F$, we denote by $\mathcal{V}$ the $1$-dimensional subfoliation tangent to $T \F^\perp$. 
	
	\begin{proposition}\label{Proposition: Kundt adapted coordinates}
		A Lorentzian $(n+2)$-dimensional manifold $(M,\F)$ with a codimension $1$, lightlike totally geodesic  foliation $\F$ admits local coordinates adapted to the foliation, in which the metric has the following form
		\begin{align}\label{Eq: Kundt metric}
			g=2 du dv + H(u,v,x) du^2 + \sum_{i=1}^{n} W_i(u,v,x) du dx^i + \sum_{i,j} h_{ij}(u,x) dx^i dx^j.
		\end{align}
	\end{proposition}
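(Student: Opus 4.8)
The plan is to construct coordinates adapted to the two nested foliations $\mathcal{V} \subset \F$ and then simply read off the metric components. I would begin by choosing, on a small neighborhood, a submersion $u \colon M \to \R$ whose level sets are the leaves of $\F$, so that $T\F = \ker du$. The key device is to take as null generator of the normal foliation the metric dual $V := (du)^\sharp$, i.e. the vector field characterized by $g(V,\cdot) = du$. Since $T\F^\perp$ is the one-dimensional radical of the degenerate plane field $T\F$ and is contained in $T\F$, and since $g(V,W) = du(W) = 0$ for every $W \in T\F$, the field $V$ lies in $T\F^\perp$ and is therefore tangent to $\mathcal{V}$; moreover $g(V,V) = du(V) = 0$, so $V$ is lightlike, and $V(u) = du(V) = 0$, so $u$ is constant along $V$.

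Next I would straighten $V$ into a coordinate field. Choosing a local hypersurface $S$ transverse to $V$, one has $du|_{TS} \neq 0$ (otherwise $du$ would vanish on all of $TM$), so $u|_S$ completes to coordinates $(u,x^1,\ldots,x^n)$ on $S$ with the $x^i$ transverse to $\mathcal{V}\cap S$ inside the leaves. Pulling these back along the flow of $V$ and letting $v$ be the flow parameter (with $v=0$ on $S$) produces a chart $(u,v,x^1,\ldots,x^n)$ in which $\partial_v = V$ while $u$ and the $x^i$ are constant along $V$. In particular $\partial_v$ and the $\partial_{x^i}$ are tangent to $T\F = \ker du$, whereas $\partial_u$ is transverse.

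With this chart the off-diagonal structure is immediate from $g(V,\cdot) = du$: for every coordinate field $\partial_a$ one has $g(\partial_v, \partial_a) = du(\partial_a) = \delta^u_a$, which yields at once $g_{vv}=0$ (no $dv^2$), $g_{vx^i}=0$ (no $dv\,dx^i$), and $g_{uv}=1$ (the exact coefficient of the $2\,du\,dv$ term). Setting $H := g_{uu}$, $W_i := 2\,g_{ux^i}$ and $h_{ij} := g_{x^ix^j}$ then gives the stated form, except for the assertion that $h_{ij}$ does not depend on $v$. This last point is the crux, and is precisely where the totally geodesic hypothesis enters. Since $\partial_{x^i}$ and $\partial_v = V$ are coordinate fields, $[V,\partial_{x^i}]=0$ and $[\partial_{x^i},\partial_{x^j}]=0$, while $g(V,\partial_{x^i})=0$; the computation in the proof of Lemma \ref{Lemma: equivalent items, tot geod iff V Killing along the leaves} (the equivalence (1)$\Leftrightarrow$(2)) then gives $V\big(g(\partial_{x^i},\partial_{x^j})\big) = -2\,g(V, \nabla_{\partial_{x^i}}\partial_{x^j}) = 0$, since $\nabla_{\partial_{x^i}}\partial_{x^j}$ is tangent to the leaf by total geodesy and $V$ is orthogonal to $T\F$. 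Hence $\partial_v h_{ij}=0$, i.e. $h_{ij}=h_{ij}(u,x)$, while $H$ and $W_i$ carry no such constraint and remain functions of $(u,v,x)$.

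I expect the one genuine difficulty, beyond bookkeeping, to be exactly this $v$-independence of the transverse metric: it is the leafwise transversally Riemannian property, so the argument must route through the Lemma rather than through any coordinate identity. A secondary point requiring care is the simultaneous compatibility of the three coordinate families, namely that $du, dv, dx^1,\ldots,dx^n$ are independent and that $V=(du)^\sharp$ can genuinely be realized as $\partial_v$; the flow-box construction over the transversal $S$ is what resolves this, and the slick choice $V=(du)^\sharp$ is what makes $g_{uv}=1$ together with the vanishing of $g_{vv}$ and $g_{vx^i}$ fall out for free.
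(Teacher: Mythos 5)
Your proof is correct, and while it follows the same overall skeleton as the paper's (a flow-box chart built over a transversal, with the totally geodesic hypothesis used only at the very end to kill $\partial_v h_{ij}$, exactly as in Lemma \ref{Lemma: equivalent items, tot geod iff V Killing along the leaves}, $(1)\Leftrightarrow(2)$), the mechanism by which you obtain the off-diagonal structure is genuinely different. The paper defines $V$ only along a transversal $\Sigma$, tangent to $\mathcal{V}$ and normalized by $g(\partial_u,V)=1$, and then extends it by \emph{lightlike geodesics}; the relations $g(V,V)\equiv 0$ and $g(\partial_u,V)\equiv 1$ are then propagated off $\Sigma$ by conservation arguments using $\nabla_V V=0$ (and the vanishing of $g(V,\partial_{x^i})$, needed for the absence of $dv\,dx^i$ terms, requires one more such propagation, which the paper leaves implicit). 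Your choice $V:=(du)^\sharp$ replaces all of this by an identity: $g(V,\cdot)=du$ holds on the whole chart domain by definition, so $g_{vv}=0$, $g_{vx^i}=0$, $g_{uv}=1$ are immediate once the flow-box chart is built, no geodesics and no propagation are needed, and the cross-term issue never arises. What the paper's construction buys in exchange is contact with the classical Kundt picture: its $v$-curves form a null geodesic congruence with $v$ an affine parameter, which is the normalization used in the General Relativity literature. Amusingly, your construction has this feature too, though you never invoke it: since $du$ is closed and $g(V,V)=du(V)=0$, one computes $g(\nabla_V V,X)=(\nabla_V du)(X)=\mathrm{Hess}\,u(X,V)=X\big(du(V)\big)-g(V,\nabla_X V)=-\tfrac12 X\big(g(V,V)\big)=0$ for all $X$, so $(du)^\sharp$ is automatically geodesic; the two routes thus produce the same kind of chart, with yours making the algebra fall out for free and the paper's making the null-congruence geometry explicit.
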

	
	\begin{proof}
		Consider a local $(n+1)$-submanifold $\Sigma$ in $M$ that is transversal to $\mathcal{V}$ (and therefore also to $\F$). The foliation $\F$ induces an $n$-dimensional foliation on $\Sigma$. Let $(u,x_1,\ldots,x_n)$ be coordinates on $\Sigma$ such that the leaves of $\F_{\vert \Sigma}$ are given by the levels of $u$. 
		Define (locally) the vector field $V$ along $\Sigma$ such that it is tangent to $\mathcal{V}$ and satisfies $g(\partial_u,V)=1$. For each point $x \in \Sigma$, consider the lightlike geodesic with initial velocity $V_x$. These geodesics, parametrized by $v$, define a local flow on $M$. Denote again by $V$ the infinitesimal generator of this flow.  Using the flow of $V$,  extend the coordinate vector fields on $\Sigma$ to the saturation of the latter by the flow of $V$.   We obtain $n+2$ commuting vector fields 
		$$U:=\partial_u, V:=\partial_v, X_i:=\partial_{x_i},$$
		satisfying $g(V,V)=0$ and $\nabla_V V=0$. 
		Now, observe that $g(U,V)$ is constant along the integral curves of $V$. Indeed, $V(g(U,V))=g(\nabla_V U, V) + g(U, \nabla_V V)= g(\nabla_U V, V) + 0 =0$. Since $g(U,V)$ is constant along $\Sigma$, it follows that $g(U,V)$ is constant everywhere. Finally, the fact that $\F$ is totally geodesic is equivalent to $\mathcal{V}$ being transversally Riemannian on every leaf of $\F$, which in turn is equivalent to the fact that the functions $h_{ij}$ do not depend on $v$. 
	\end{proof}

	\subsection{Hierarchy}
	Kundt spaces with special form of coordinates  give rise to well-known classes of Lorentzian spaces, having some special geometry on the leaves:

	\subsubsection{\textbf{Brinkmann spaces.}}
	Brinkmann spaces are Kundt spaces for which the foliation $\F$ is tangent to a distribution $V^\perp$, where $V$ is a (global) lightlike parallel  vector field. \medskip
	
	For the metric (\ref{Eq: Kundt metric}), one can see that $V:=\partial_v$ is parallel if and only if  the functions $H$ and $W_i$ do not depend on $v$. Hence the following equivalent definition 

    \begin{fact}
     A Brinkmann space $M$ is a Lorentzian manifold admitting a global vector field $V$, such that any point of $M$ admits a coordinate chart $(u, v, x_1, \ldots , x_n)$ where the metric takes the form  
	\begin{align}\label{Eq: Brinkmann coordinates}
		g=2 du dv + H(u,x) du^2 + \sum_{i=1}^{n} W_i(u,x) du dx^i + \sum_{i,j} h_{ij}(u,x) dx^i dx^j,
	\end{align}
	with $V=\partial_v$.   
    \end{fact}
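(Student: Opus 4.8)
The plan is to read off the Fact directly from the Kundt normal form already secured in Proposition \ref{Proposition: Kundt adapted coordinates}, by computing precisely when the coordinate field $\partial_v$ is parallel. A Brinkmann space is, by the definition just given, a Kundt space carrying a global lightlike \emph{parallel} field $V$ with $\F$ tangent to $V^\perp$. Since a parallel field is in particular geodesic and lightlike, it is an admissible choice for the field $V$ in the construction underlying Proposition \ref{Proposition: Kundt adapted coordinates}; I would therefore build the adapted coordinates $(u,v,x_1,\dots,x_n)$ \emph{using this very} $V$, so that $V=\partial_v$ and the metric takes the form (\ref{Eq: Kundt metric}). The whole statement then reduces to the remark preceding it: $\partial_v$ is parallel if and only if $H$ and the $W_i$ are independent of $v$.

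To establish that equivalence, I would use that the frame $\partial_u,\partial_v,\partial_{x_i}$ is commuting, as guaranteed by the construction. For commuting fields the Koszul formula collapses to $2\,g(\nabla_A \partial_v, C)=A(g(\partial_v,C))+\partial_v(g(A,C))-C(g(A,\partial_v))$. The key observation is that every inner product involving $\partial_v$ is constant: reading off (\ref{Eq: Kundt metric}) gives $g(\partial_v,\partial_u)=1$, $g(\partial_v,\partial_v)=0$, and $g(\partial_v,\partial_{x_i})=0$. Hence the first and third terms vanish identically and the formula reduces to $2\,g(\nabla_A \partial_v, C)=\partial_v(g(A,C))$ for all frame fields $A,C$. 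Consequently $\partial_v$ is parallel if and only if every metric coefficient is $v$-independent.

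It then remains to isolate which coefficients can carry $v$-dependence. The coefficients $g(\partial_v,\cdot)$ and $g(\partial_u,\partial_v)$ are constant, and the $h_{ij}=g(\partial_{x_i},\partial_{x_j})$ are already $v$-independent by the totally geodesic hypothesis built into (\ref{Eq: Kundt metric}); the only coefficients that may depend on $v$ are $H=g(\partial_u,\partial_u)$ and $W_i=2\,g(\partial_u,\partial_{x_i})$. Therefore $\partial_v$ is parallel exactly when $H$ and the $W_i$ do not depend on $v$, which is precisely the passage from (\ref{Eq: Kundt metric}) to (\ref{Eq: Brinkmann coordinates}). For the converse, a chart in which the metric has the form (\ref{Eq: Brinkmann coordinates}) with $V=\partial_v$ makes $V$ lightlike and, by the same Koszul computation, parallel; then $\nabla_X V=0$ satisfies item (4) of Lemma \ref{Lemma: equivalent items, tot geod iff V Killing along the leaves} with $\alpha=0$, so $V^\perp$ is tangent to a lightlike totally geodesic foliation and $M$ is a Brinkmann space.

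I do not anticipate a genuine obstacle: the calculation is routine once a commuting adapted frame is in place, and the constancy of $g(\partial_v,\cdot)$ does all the work in the Koszul formula. The one step requiring care is the very first one, namely ensuring that the globally defined parallel field $V$ is the field used to generate the coordinate $v$; this is what forces $V=\partial_v$ and lets the coordinate computation describe the given $V$ rather than some unrelated geodesic lightlike field.
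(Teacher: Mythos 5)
Your route --- run the construction of Proposition \ref{Proposition: Kundt adapted coordinates} with the given parallel field, then characterize parallelness of $\partial_v$ via the Koszul formula --- is the same as the paper's, and your Koszul computation and the converse direction (item (4) of Lemma \ref{Lemma: equivalent items, tot geod iff V Killing along the leaves} with $\alpha=0$) are correct. But the step you yourself flag as ``requiring care'' is a genuine gap, and the justification you offer for it is insufficient. Being geodesic and lightlike does \emph{not} make $V$ an admissible input to that construction: there, the coordinates $(u,x_1,\dots,x_n)$ on the transversal $\Sigma$ are chosen first, and the field generating the $v$-coordinate is then \emph{normalized} by the condition $g(\partial_u,V)=1$ along $\Sigma$. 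If you insist on using the prescribed parallel $V$, you must show that the coordinates on $\Sigma$ can be chosen so that this normalization holds for $V$ itself. For a general geodesic lightlike field spanning $T\F^\perp$ this can fail; in that case the construction yields $\partial_v=fV$ with $f$ non-constant (constant along each geodesic but varying over $\Sigma$), and a non-constant rescaling of a parallel field is not parallel ($\nabla_X(fV)=X(f)\,V$), so by your own equivalence the resulting chart has $v$-dependent $H$ or $W_i$ and is \emph{not} of the form (\ref{Eq: Brinkmann coordinates}). So the normalization is essential, not cosmetic, and it is exactly where parallelness (rather than geodesicity) must be used.

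The missing argument is short. Since $\nabla V=0$ and $\nabla$ is torsion-free, the $1$-form $g(V,\cdot)$ is closed; concretely, for coordinate fields on $\Sigma$ adapted to $\F_{\vert\Sigma}$ one has
$\partial_{x_i}\,g(\partial_u,V)=g(\nabla_{\partial_{x_i}}\partial_u,V)=g(\nabla_{\partial_u}\partial_{x_i},V)=\partial_u\,g(\partial_{x_i},V)=0$,
where both uses of $\nabla V=0$ and the identity $g(\partial_{x_i},V)=0$ (as $\partial_{x_i}$ is tangent to $\F$) enter. Hence $g(\partial_u,V)$ depends on $u$ alone, and a reparametrization $u\mapsto\varphi(u)$ makes it identically $1$. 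With this choice, the geodesics issued from $V_{\vert\Sigma}$ are the integral curves of $V$ (since $\nabla_VV=0$), so $\partial_v=V$ on the whole chart, and the rest of your argument goes through verbatim.
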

    
     These coordinates are  known as Brinkmann coordinates. 

\begin{example}\label{EX: homogeneous Brinkmann}
Let $G:=\R \ltimes \Heis_3$ denote a semi-direct product, where $\R$ acts on $\Heis_3$ by a one-parameter group of automorphisms $\rho(t)=e^{t A}$, for some derivation $A \in \Der(\heis_3)$. 
Let $(X, Y, Z)$ be a basis of $\heis_3$, such that $[X,Y]=Z$. Extend it to a basis $(T,X,Y,Z)$ of the Lie algebra $\g$. 
Define a Lorentzian scalar product on $\g$ as follows
\begin{itemize}
    \item $\langle \;\;,\;\; \rangle:$ the usual Euclidean scalar product on $\R^2=\Span_\R(X,Y)$, 
    \item $\langle T,T \rangle = \langle V,V\rangle = 0,\langle T,V\rangle = 1$,
    \item $\R T \oplus \R V  \perp \R^2$.
\end{itemize}
This scalar product induces a left-invariant Lorentzian metric $g$ on $G$. We consider the Lorentzian space $(G,g)$. 
Denote by $V$ the right-invariant vector field on $G$ generated by $Z$. 
Since $V$ is right-invariant, its flow corresponds to left multiplications by a one-parameter subgroup of $G$. This action is isometric, since the metric is left-invariant, and thus, $V$ is a Killing field. The Koszul formula for three Killing fields $U, V, W$ is given by 
$$2g(\nabla_U V, W)=g([U,V],W) + g([V,W],U) - g([W,U],V).$$
Applying this to $V$, one  shows that $\nabla V=0$, meaning that $V$ is a parallel vector field. In particular, it also lightlike. Hence, the space $(G,g)$ is a homogeneous Brinkmann space.     
\end{example}
    
	Compact Brinkmann spaces exhibit interesting geometric properties: their geodesic completeness and the dynamics of the lightlike parallel flow  are studied in \cite{mehidi2022completeness}.
	
	\subsubsection{\textbf{Weakly Brinkmann}}
	These are  Lorentzian manifolds admitting a lightlike parallel  line bundle. In other words, they are locally Kundt spaces in which the line field $T \F^\perp$ is parallel on $M$. 
	These spaces are  sometimes referred to as Walker manifolds, which more broadly refer to  pseudo-Riemannian manifolds with a  parallel lightlike distribution \cite{Walker}. 

    \begin{fact}
      The metric of a weakly Brinkmann manifold has the form (\ref{Eq: Kundt metric}), where the functions $W_i$ do not depend on $v$. The parallel lightlike line field  is then given by $\R \partial_v$.   
    \end{fact}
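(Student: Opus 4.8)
The plan is to start from the adapted coordinates of Proposition \ref{Proposition: Kundt adapted coordinates}, so that the metric already has the form (\ref{Eq: Kundt metric}) with $V = \partial_v$ spanning $T\F^\perp$ and $V^\perp = \Span(\partial_v, \partial_{x_1}, \ldots, \partial_{x_n})$ (since $dv$ enters the metric only through the term $2\,du\,dv$, one reads off $g(\partial_v, \partial_v) = 0$ and $g(\partial_v, \partial_{x_i}) = 0$). Being weakly Brinkmann means exactly that the line field $\R V$ is parallel on all of $M$, i.e. $\nabla_X V \in \R V$ for every vector field $X$. As this condition is tensorial in $X$, I would verify it on the coordinate frame $\partial_u, \partial_v, \partial_{x_i}$.

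Two of the three families of directions cost nothing. For $X = \partial_v$ one has $\nabla_V V = 0$ by the very construction of the coordinates (the $v$-curves are the lightlike geodesics generating the flow of $V$), so $\nabla_{\partial_v} V \in \R V$ trivially. For $X = \partial_{x_i} \in \Gamma(V^\perp)$, the fact that $\F$ is totally geodesic already forces $\nabla_{\partial_{x_i}} V \in \R V$; this is precisely item (4) of Lemma \ref{Lemma: equivalent items, tot geod iff V Killing along the leaves}. Hence the only genuinely new constraint imposed by parallelism comes from the transversal direction $X = \partial_u$.

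To extract that constraint I would use that, the metric being nondegenerate and $V$ lightlike, $(V^\perp)^\perp = \R V$; thus $\nabla_{\partial_u} V \in \R V$ is equivalent to $g(\nabla_{\partial_u} V, W) = 0$ for all $W \in V^\perp$. The case $W = \partial_v$ is automatic, since $g(V,V) \equiv 0$ gives $g(\nabla_{\partial_u} V, V) = \tfrac12 \partial_u\, g(V,V) = 0$. For $W = \partial_{x_i}$, the Koszul formula applied to commuting coordinate fields yields
\begin{align*}
2\, g(\nabla_{\partial_u}\partial_v, \partial_{x_i}) = \partial_u\, g(\partial_v, \partial_{x_i}) + \partial_v\, g(\partial_u, \partial_{x_i}) - \partial_{x_i}\, g(\partial_u, \partial_v) = \partial_v\, g(\partial_u, \partial_{x_i}),
\end{align*}
because $g(\partial_v, \partial_{x_i}) = 0$ and $g(\partial_u, \partial_v) = 1$ are constant. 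Reading $g(\partial_u, \partial_{x_i})$ off the cross term $\sum_i W_i\,du\,dx^i$ shows this last expression is a nonzero multiple of $\partial_v W_i$. Therefore $\R\partial_v$ is parallel if and only if $\partial_v W_i = 0$ for all $i$, that is, the $W_i$ are independent of $v$, while no condition is placed on $H$. This yields the stated metric form and identifies $\R\partial_v$ as the parallel lightlike line field.

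The computation itself is routine once it is set up; the part that requires care is the bookkeeping that separates the conditions already guaranteed by the Kundt (totally geodesic) hypothesis from the single new condition contributed by parallelism. The clean observation making this transparent is $(V^\perp)^\perp = \R V$, which reduces the parallelism of the line field to the vanishing of finitely many metric derivatives and isolates $\partial_v W_i = 0$ as the weakly Brinkmann condition, strictly weaker than the Brinkmann condition $\partial_v H = \partial_v W_i = 0$.
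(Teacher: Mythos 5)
Your proof is correct and follows essentially the same route as the paper: both work in the adapted coordinates of Proposition \ref{Proposition: Kundt adapted coordinates}, dispose of the leafwise directions via Lemma \ref{Lemma: equivalent items, tot geod iff V Killing along the leaves} (together with $\nabla_V V=0$), and extract $\partial_v W_i = 0$ from the $\partial_u$-direction via the Koszul formula. The only cosmetic difference is that you compute $g(\nabla_{\partial_u} V, \partial_{x_i})$ directly, whereas the paper computes $g(\nabla_{\partial_u}\partial_{x_i}, V)$ (using, in the forward direction, that $T\F$ is parallel whenever $\R V$ is); the two quantities are negatives of each other by metric compatibility applied to $g(V,\partial_{x_i})=0$, so the content is identical.
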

	
	\begin{proof}
		  We use the same notations as in the proof of Proposition \ref{Proposition: Kundt adapted coordinates} for the coordinate vector fields. Assume  that the line field $T \F^\perp$ is parallel on $M$, then $T \mathcal{F}$ is also parallel. Consequently, $\nabla_U X_i \in \Gamma(T\F)$, so\, $g(\nabla_U X_i, V)=0$. On the other hand, applying the Koszul formula yields $2 g(\nabla_U X_i, V)= -V (g(U,X_i))=- \partial_v W_i$, which yields $\partial_v W_i=0$, hence the first implication. Conversely, suppose that the $W_i$'s do not depend on $v$ (equivalently, that $\nabla_U X_i \in \Gamma(T\F)$ for all $i$). Since $g(X_i,V)=0$, we have $g(\nabla_U X_i, V)=- g(X_i, \nabla_U V)$, which implies, by our assumption, that $g(X_i, \nabla_U V)=0$ for all $i$. On the other hand, $2g(V,\nabla_U V)=U (g(V,V))=0$. So $\nabla_U V$ is orthogonal to $\F$, hence collinear to $V$. Combined with the fact that $\R V$ is parallel along the leaves of $\F$ (see Lemma \ref{Lemma: equivalent items, tot geod iff V Killing along the leaves}), this proves that $\R V$ is a parallel lightlike line field. 	
	\end{proof}
    
We wish to highlight a subtle phenomenon that occurs here, and that does not occur in Riemannian signature. Let $(M,l)$ be a weakly Brinkmann space, with $l$ a lightlike parallel line field on $M$.  Locally, it is always possible to define parallel lightlike vector fields spanning $l$. If $l$ were timelike or spacelike, then, by passing to a time-orientable cover of $M$, one would obtain a global parallel vector field spanning $l$ by taking a constant-length section that is compatible with the orientation. 
However,  when $l$ is lightlike,  there is no natural way to select a global parallel section. In this case, time orientation does not help to patch together the existing local parallel sections. Let us give a concrete example.  

\begin{example}
Consider again the Lorentzian space defined in Example \ref{EX: homogeneous Brinkmann}. Keeping the same notations, we have $[T,\omega]=A(\omega)$ for all $\omega \in \heis_3$. 
Assume that  $A(Z)=Z$, so that $[T,Z]=Z$. For $\omega \in \g$, let $\overline{\omega}$ denote   the right-invariant vector field generated by $\omega$.  We claim that the line field $\R \overline{Z}$ is left-invariant, but the vector field $V:=\overline{Z}$ is not left-invariant. Let us prove this. 
Since $[\omega,Z]$ is collinear to $Z$ for all $\omega \in \g$, the (right-invariant) vector field $[\overline{\omega},V]$ is collinear to $V$ for all $\omega \in \g$.  This implies that the line field $\R V$ is left-invariant, since the flow of a right-invariant vector field corresponds to left multiplication by a one-parameter subgroup of $G$.  
On the other hand, the bracket $[T,Z]=Z$ yields $[\overline{T}, V]=V$. Thus, the vector field $V$ is not invariant by the left action of the $\R$-factor in $G$. This proves the claim. 
Now, consider the quotient space $X:=\Gamma \backslash G$, where $\Gamma$ is a discrete torsion-free subgroup of $G$. Assume that $\Gamma$ has a non-trivial projection to the $\R$-factor. Then it contains elements that act on $V$ by scaling, sending $V$ to $\lambda V$, with $\lambda \neq 1$. Consequently, only the lightlike parallel line field $\R V$ descends to the quotient. Such a quotient is therefore a  weakly Brinkmann space. 
\end{example}

\subsubsection{\textbf{A subclass of Brinkmann spaces: pp-waves.}} 
In a Brinkmann space, the codimension one foliation $\F$ is totally geodesic. Thus, for any vector fields $X$, $Y$ tangent to $\F$, $\nabla_X Y$ is also tangent to $\F$, inducing a connection on $T\F$. 
A pp-wave is a Brinkmann space for which the leaves of $\F$ are flat with respect to this induced connection.
	
\begin{fact}
A pp-wave is a Lorentzian manifold with a global vector field $V$, such that each point admits local coordinates of the form (\ref{Eq: Brinkmann coordinates}), where $W_i=0$ and $h=\sum_{i=1}^n (dx^i)^2$ is the Euclidean metric of $\R^n$. In these coordinates, $V$ is  represented locally by  $\partial_v$. Thus,  the  metric of a pp-wave is locally given by
	\begin{align}\label{Eq: pp-waves coordinates}
		g^\Mink_H = 2 du dv + H(u,x) du^2 + \sum_{i=1}^n (dx^i)^2.
	\end{align}    
\end{fact}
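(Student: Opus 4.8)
The plan is to start from the Brinkmann normal form (\ref{Eq: Brinkmann coordinates}) supplied by Proposition \ref{Proposition: Kundt adapted coordinates} (with $V=\partial_v$ parallel, so that $H$, $W_i$ and $h_{ij}$ are $v$-independent) and to improve the coordinates in three successive stages: first turning the transverse metric $h$ into the Euclidean one, then gauging the functions $W_i$ away, all the while keeping $V=\partial_v$ so that the final chart is of the form (\ref{Eq: pp-waves coordinates}).

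First I would extract the geometric content of the flatness hypothesis. Fix a leaf $F=\{u=c\}$; it is an $(n+1)$-dimensional totally geodesic submanifold, so $\nabla$ restricts to a connection on $T\F$. Since $V=\partial_v$ is parallel (and tangent to $F$, with $\R V$ the radical of the induced degenerate metric), one has $\nabla_X V=0$ for all $X$, so $V$ contributes nothing to the curvature and the induced connection descends to the quotient bundle $T\F/\R V$. Writing $X_i=\partial_{x_i}$ and using that all coordinate fields commute, the Koszul formula gives $2g(\nabla_{X_i}X_j,X_k)=\partial_{x_i}h_{jk}+\partial_{x_j}h_{ik}-\partial_{x_k}h_{ij}$, so the transverse part of $\nabla_{X_i}X_j$ is governed by the Christoffel symbols of $h(c,\cdot)$; thus the descended connection is the Levi-Civita connection of $h(c,\cdot)$. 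Consequently the transverse component of the leaf curvature equals the curvature of $h(u,\cdot)$, and flatness of the leaf forces $h(u,\cdot)$ to be a flat Riemannian metric for every $u$.

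Next I would straighten $h$. A flat metric admits Cartesian coordinates, so for each fixed $u$ one replaces $x$ by new transverse coordinates $y=y(u,x)$ in which $h=\sum_i(dy^i)^2$; carrying this out smoothly in $u$ and substituting $dx^i$ in terms of $du$ and the $dy^a$ reintroduces $du^2$ and $du\,dy^a$ terms, which are absorbed into redefined functions $H$ and $W_i$. Since the change does not involve $v$, one still has $V=\partial_v$ and the metric is again of Brinkmann form (\ref{Eq: Brinkmann coordinates}), now with $h=\sum_i(dy^i)^2$. I expect the verification that this normalization can be performed simultaneously and smoothly in the parameter $u$, while preserving the Brinkmann shape, to be the first technical point.

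Finally, with $h$ Euclidean the only surviving piece of the leaf curvature is the $\R V$-component of $R(X_i,X_j)X_k$, and a short Koszul computation shows it is proportional to $\partial_{x_k}(\partial_{x_i}W_j-\partial_{x_j}W_i)\,V$; hence flatness is equivalent to $c_{ij}(u):=\partial_{x_i}W_j-\partial_{x_j}W_i$ being independent of $y$. This is exactly what permits gauging the $W_i$ away in two steps. The closed part of the $1$-form $\sum_i W_i\,dy^i$ is exact, say $d_y\phi(u,\cdot)$, and the shift $v\mapsto v+\phi(u,y)$ (which fixes $\partial_v$) removes it, leaving $W_i$ linear in $y$ with constant antisymmetric coefficient $c_{ij}(u)$; this residual term is then killed by passing to a $u$-dependent orthonormal frame $y=R(u)z$ with $R(u)\in\SO(n)$, where $R^{-1}R'=A(u)\in\so(n)$ is chosen so that the cross term produced by $\sum_i(dy^i)^2$ cancels the $c_{ij}$-term. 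After these substitutions $W_i=0$, $h=\sum_i(dz^i)^2$ and $V=\partial_v$, which is the asserted form (\ref{Eq: pp-waves coordinates}). I expect this last stage to be the main obstacle: organizing the two gauge transformations and checking that the rotating frame exactly cancels the antisymmetric residue, the solvability of the frame ODE being the crucial point.
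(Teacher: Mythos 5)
Your argument is correct, but it takes a genuinely different route from the paper's. You stay inside the Brinkmann chart (\ref{Eq: Brinkmann coordinates}) and gauge-fix in stages: (i) you split the leaf curvature into its transverse part, which is the curvature of $h(u,\cdot)$, and its $\R V$-part, which after straightening $h$ is proportional to $\partial_{x_k}(\partial_{x_i}W_j-\partial_{x_j}W_i)\,V$, so flatness of the leaves forces $h(u,\cdot)$ flat and $c_{ij}:=\partial_{x_i}W_j-\partial_{x_j}W_i$ to depend only on $u$; (ii) you remove the exact part of $\sum_i W_i\,dx^i$ by the shift $v\mapsto v+\phi(u,x)$, which fixes $\partial_v$; (iii) you kill the residual antisymmetric linear part by a rotating frame $x=R(u)z$, where $R(u)\in\SO(n)$ solves a linear ODE of the form $\dot R=\lambda\, c(u)R$ and stays in $\SO(n)$ precisely because $c(u)\in\so(n)$. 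I checked the curvature computation and the cancellation in (iii); both are sound, and the two technical points you flag are genuine but standard: for the smooth-in-$u$ straightening, a parallel $h(u,\cdot)$-orthonormal frame on a flat, simply connected leaf consists of commuting vector fields depending smoothly on the parameter $u$, hence gives Cartesian coordinates varying smoothly with $u$; for the ODE, existence and orthogonality of $R(u)$ are immediate from antisymmetry of $c(u)$. The paper proceeds quite differently: it never passes through the Brinkmann chart, but builds the coordinates directly from a frame $(U,V,X_1,\ldots,X_n)$ in which the $X_i$ are chosen orthonormal and parallel along each leaf --- flatness of the leaves is exactly what makes this parallel extension consistent --- and then uses curvature symmetries such as $g(R(X_k,U)X_i,X_j)=g(R(X_i,X_j)X_k,U)=0$ to show that $g(U,V)=1$ and $g(U,X_i)=0$, so that the frame is a coordinate frame realizing (\ref{Eq: pp-waves coordinates}) in one stroke. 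Your derivation is the classical gauge-fixing one from the plane-wave literature: it makes explicit which curvature component each normalization kills and what residual coordinate freedom remains; the paper's frame-theoretic construction avoids the parametric Poincar\'e lemma and the ODE bookkeeping altogether, and stays closer to the coordinate-free spirit of the text.
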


	\begin{proof}
	Consider a geodesic $\gamma(t)$ that is transversal to $\F$ and satisfies $g(\gamma'(t), V)=1$. Let  $(X_1,\ldots,X_n)$ be a frame field  
    along $\gamma$, where each $X_i$ is tangent to $\F$, transversal to $\mathcal{V}$, and satisfies $g(\gamma'(t_0),X_i)=0$ at some point $t_0$.  Since the distribution $T\F$ is parallel on $M$, the parallel transport of $X_i(\gamma(t_0))$ along $\gamma$ defines a vector field along $\gamma$ which is tangent to $\F$ and transversal to $\mathcal{V}$. Therefore,  we may assume that the frame field $(X_1,\ldots,X_n)$ is parallel along $\gamma$. In particular, $g(\gamma'(t),X_i)=0$ for all $t$. 
    We  assume further that the frame is orthonormal with respect to the (degenerate) Riemannian metric on each leaf.
    
    Now, extend the frame field $(X_1,\ldots,X_n)_{\gamma(t)}$ to a  parallel frame field along the leaf of $\F$ containing $\gamma(t)$. Since $\F$ is totally geodesic, this extension defines a frame field in a neighborhood of $\gamma$, which we denote again by $(X_1,\ldots,X_n)$, tangent to $\F$ and parallel along each leaf. 
    Finally, extend the vector field $\gamma'(t)$, defined along $\gamma$, by means of the flows of $V$ and $X_i$ for $i=1,\ldots,n$. We obtain a coordinate frame field $(U,V,X_1,\ldots,X_n)$ in a neighborhood of $\gamma$ that satisfies the aforementioned properties. 

   1) We claim that $g(U,V)=1$. To see this, first note that the flow of $V$ is isometric and preserves both $V$ and $U$. Consequently, we have $V (g(U,V))=0$. Next, using the Koszul formula, we write
   \begin{equation}\label{K}
     2  g(\nabla_{X_i} V, U)= X_i (g(V,U)) + V(g(X_i, U)).
   \end{equation}
Since $X_i$ is also preserved by $V$, we have $V (g(X_i, U))=0$. Moreover, $V$ is parallel, so $\nabla_{X_i} V=0$. Then (\ref{K}) yields $X_i (g(V,U))=0$. Thus, $g(V,U)$ is constant along the leaves of $\F$. Since it is also constant along the geodesic $\gamma$, it must be globally constant, and hence equal to $1$. 
   
   2) Next, we will show that $g(U,X_i)=0$ for any $i=1,\ldots,n$. To begin, we will show that 
		\begin{align}\label{Eq-proof}
			\nabla_U X_i \in \R V.
		\end{align}
		Using successively the facts that $[U,X_i]=0$, $g(U,V)=1$, and that $V$ is parallel, we write $g(\nabla_U X_i, V) = g(\nabla_{X_i} U, V) = -g(U, \nabla_{X_i} V) =0$. It follows that $\nabla_U X_i$ is orthogonal to $V$, and  
        hence can be written as  $\nabla_U X_i = \sum_k f_j X_j + \phi V$, where $f_j$ and $\phi$ are smooth functions, and $f_j$ is zero along $\gamma$ by construction.  We will prove that $f_j$ is constant on the leaves of $\F$. We proceed in two steps, using the flatness of the leaves:
        
      \textbf{a)} By construction,  $\nabla_{X_k} X_j=0$ for all $j,k=1,\ldots,n$. Using this, compute $X_k (g(\nabla_U X_i, X_j)) = g(\nabla_{X_k} \nabla_U X_i, X_j) = g(R(X_k, U) X_i, X_j) $. On the other hand, $g(R(X_k, U) X_i, X_j)  =g(R(X_i, X_j)X_k, U)=0$. It follows that $X_k (g(\nabla_U X_i, X_j))=0$, hence  $f_j=g(\nabla_U X_i, X_j)$ is constant along the integral curves of $X_k$ for any $k$.
      
       \textbf{b)} Similarly, $V(g(\nabla_U X_i, X_j)) = g(\nabla_V \nabla_U X_i, X_j)$, since  $\nabla_V X_j = \nabla_{X_j} V =0$. But $g(\nabla_V \nabla_U X_i, X_j) =g(R(V,U) X_i, X_j) =g(R(X_i,X_j)V, U) =0$. It follows that $V(g(\nabla_U X_i, X_j))=0$, proving that $f_j$ is  constant along the integral curves of $V$.

		Therefore, $f_j$ is constant on each leaf of $\F$, and since $f_j$ vanishes on the curve $\gamma$, which is transversal to $\F$, it follows that $f_j=0$ everywhere, which confirms (\ref{Eq-proof}).  
        
		Now, using (\ref{Eq-proof}), we write $X_j(g(U,X_i))= g(\nabla_U X_j, X_i) =0$.  
		Moreover, it follows from (\ref{K}) that $g(U,X_i)$ is constant on a leaf of $V$.  So finally, $g(U,X_i)$ is constant on a leaf of $\F$. Since it is zero on $\gamma$, which is transversal to $\F$, it must be zero everywhere. 
	\end{proof}
	
	\begin{remark}
 A pp-wave generalizes the Minkowski space. When $H=0$, $g^\Mink_0$ is the Minkowski metric.  
	\end{remark}

	\paragraph{\textbf{Foliations with a tangential structure}} It is well known that a flat affine connection on a smooth manifold  of dimension $n+1$ is equivalent to a $(\Aff(\R^{n+1}), \R^{n+1})$-structure on it. So when $(M,\mathcal{F})$ is a pp-wave, the leaves of $\mathcal{F}$ inherit a tangential affine structure. Moreover, the leaves of $\mathcal{F}$ 
    admit a tangent parallel lightlike vector field $V$,
    and an induced parallel degenerate Riemannian metric with  radical $\R V$.
    This endows the leaves with a special affine geometry, referred to in \cite{article1} as the `affine unimodular lightlike geometry', and denoted by $(\L_{\mathsf{u}}(n), \R^{n+1})$. This is a subgeometry of the affine geometry,  where $\mathsf{L}_{\mathsf{u}}(n)= \O(n) \ltimes \Heis_{2n+1}$ is the subgroup of $\Aff(\R^{n+1})$ preserving  the degenerate Riemannian scalar product $q:=dx_2^2+dx_3^2+\ldots+dx_{n+1}^2$, and the lightlike vector field $\partial_{x_1}$.  This group $\L_{\mathsf{u}}(n)$ is thus called the `affine unimodular lightlike group'.

	\subsubsection{\textbf{Plane waves.}} A plane wave is a specific class of pp-waves, in which the function $H$ in (\ref{Eq: pp-waves coordinates}) is quadratic in the variables $(x^i)$, with coefficients that depend on $u$. Explicitly,  $H(u,x)=x^\top S(u) x$, where $S(u)$ is a symmetric matrix depending on $u$. Thus, the metric of a plane wave can be written locally as
	\begin{align}\label{Eq: plane waves coordinates}
		g= 2 du dv + x^\top S(u) x\, du^2 + \sum_{i=1}^n (dx^i)^2.
	\end{align}
	
	\paragraph{\textbf{The algebra of Killing fields}} Plane waves among Brinkmann spaces can also be characterized through their local isometry algebra of Killing fields. Let 
   $\mathfrak{g}_0$   denote the subalgebra of the isometry algebra that fixes a leaf of the $\mathcal{F}$-foliation and preserves the vector field $V$. From the definition of a pp-wave, it follows that there exists a (faithful) representation $$\pi: \mathfrak{g}_0 \to \mathfrak{o}(n) \ltimes \heis_{2n+1},$$
	where $\mathfrak{o}(n) \ltimes \heis_{2n+1}$ is the Lie algebra of the affine  unimodular lightlike group.  It is a known result (see, for instance, \cite{Blau}) that the Lie algebra of Killing fields of a plane wave contains the Heisenberg algebra $\heis_{2n+1}$ . We also have the converse:
	
	\begin{fact}\cite{article1}
		A plane wave is a  pp-wave for which $\pi(\mathfrak{g}_0)$ contains $\heis_{2n+1}$.     
	\end{fact}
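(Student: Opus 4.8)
The statement is a characterization (an \emph{iff}), and the forward implication — that the Killing algebra of a plane wave contains $\heis_{2n+1}$ — is the classical fact recalled just above; so the plan is to prove the converse, and, for completeness, to make the Heisenberg structure explicit in the plane-wave case. The plan is to first determine \emph{all} Killing fields of a general pp-wave that lie in $\mathfrak{g}_0$, working in the Brinkmann coordinates $(u,v,x^1,\dots,x^n)$ where $g = 2\,du\,dv + H(u,x)\,du^2 + \sum_i (dx^i)^2$ and $V=\partial_v$. Writing $K = K^u\partial_u + K^v\partial_v + \sum_i K^i\partial_{x^i}$ and imposing both $[K,V]=0$ (so every component $K^a$ is independent of $v$) and $\mathcal{L}_K g = 0$, I would read off the Killing equations component by component.

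The $(vi)$ and $(uv)$ equations force $K^u$ to be a constant, and fixing the leaf $\{u=u_0\}$ then gives $K^u\equiv 0$, so every such $K$ is tangent to all leaves. The $(ij)$ equations say that $x\mapsto (K^i(u,x))_i$ is, for each fixed $u$, an infinitesimal Euclidean isometry, hence $K^i = \sum_j A_{ij}(u)x^j + b_i(u)$ with $A(u)\in\mathfrak{o}(n)$; crucially, the integrability (equality of mixed $x$-partials) of the $(ui)$ equations defining $K^v$ forces $\dot A\equiv 0$, so $A$ is a \emph{constant} antisymmetric matrix, and yields $K^v = -\dot b(u)\cdot x + c(u)$. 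Substituting into the $(uu)$ equation produces the single master relation
\begin{equation*}
\langle A x + b(u),\, \nabla_x H(u,x)\rangle = 2\,\ddot b(u)\cdot x - 2\,\dot c(u),
\end{equation*}
which must hold identically in $(u,x)$ and is the engine of the argument. The correspondence between $\pi(K)$ and the leaf-data $(A,b(u_0),\dot b(u_0),c(u_0))$ is then transparent: $A$ is the $\mathfrak{o}(n)$-part, $b(u_0)$ the translation part, $\dot b(u_0)$ the shear part, and $c(u_0)$ the center.

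For the converse I would use that $\heis_{2n+1}\subseteq\pi(\mathfrak{g}_0)$ supplies, for each $i$, a Killing field $K_{(i)}\in\mathfrak{g}_0$ with $\pi(K_{(i)})$ equal to the pure translation $\partial_{x^i}$ on the leaf. A pure translation forces $A(u_0)=0$, hence $A\equiv 0$ by the constancy just established, and $b_{(i)}(u_0)=e_i$. Differentiating the master relation with respect to $x$ brings in the $x$-Hessian of $H$ and gives that $\mathrm{Hess}_x H(u,x)\,b_{(i)}(u)$ is independent of $x$ for each $i$. Since the $b_{(i)}(u_0)=e_i$ span $\R^n$, the vectors $b_{(i)}(u)$ remain a basis for $u$ near $u_0$, so $\mathrm{Hess}_x H(u,x)$ is independent of $x$ throughout a neighborhood; equivalently $H$ is a polynomial of degree $\le 2$ in $x$ for every $u$. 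A standard coordinate change $x\mapsto x - p(u)$, $v\mapsto v + \phi(u,x)$ preserving the pp-wave form removes the residual linear and constant terms, leaving $H(u,x)=x^\top S(u)x$ — precisely the plane-wave metric.

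For the forward implication I would exhibit the Killing fields directly: when $H = x^\top S(u)x$ the master relation with $A=0$ reduces to the linear ODE $\ddot\xi = S(u)\xi$, and each solution $\xi$ gives a Killing field $X_\xi = \sum_i\xi_i(u)\partial_{x^i} - (\dot\xi\cdot x)\partial_v$ satisfying $[X_\xi,X_\eta] = (\dot\xi\cdot\eta - \xi\cdot\dot\eta)\,\partial_v$; the bracket coefficient is the constant Wronskian, so the $2n$-dimensional solution space together with $\R\partial_v$ is a copy of $\heis_{2n+1}$, and the evaluation $\xi\mapsto(\xi(u_0),\dot\xi(u_0))$ shows $\pi$ sends it onto all translations, shears and the center. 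I expect the main obstacle to be exactly the converse, and within it the propagation of quadraticity from the single fixed leaf $u_0$ to a whole $u$-neighborhood: this is where the constancy of $A$ (forced by integrability of the $K^v$-equation) and the persistence of the spanning property of the $b_{(i)}(u)$ are indispensable, while the final normalization to the pure quadratic form should be treated as a routine coordinate reduction.
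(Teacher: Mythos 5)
Your proposal is correct, but there is no proof in the paper to compare it with: the Fact is quoted from \cite{article1} without argument, so your Brinkmann-coordinate computation serves as a self-contained substitute for the citation. I checked the key steps and they hold: the $(vi)$ and $(uv)$ Killing equations do force $K^u$ to be constant (hence $\equiv 0$ once a leaf is fixed); the $(ij)$ equations give $K^i=(A(u)x+b(u))^i$ with $A(u)\in\mathfrak{o}(n)$; symmetry of the mixed $x$-partials of $K^v$ in the $(ui)$ equations kills $\dot A$; and the $(uu)$ equation is exactly your master relation $\langle Ax+b(u),\nabla_x H\rangle=2\ddot b(u)\cdot x-2\dot c(u)$. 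In the converse, differentiating this relation in $x$ gives $\mathrm{Hess}_x H(u,x)\,b_{(i)}(u)=2\ddot b_{(i)}(u)$, constant in $x$, and since the $b_{(i)}(u)$ stay a basis near $u_0$ this makes $H$ quadratic-plus-affine in $x$ there; the normalization $x\mapsto x+p(u)$, $v\mapsto v-\dot p(u)\cdot x+s(u)$ with $\ddot p=Sp+l/2$ indeed removes the affine part while preserving the form (\ref{Eq: pp-waves coordinates}) and the field $\partial_v$. The forward direction via $\ddot\xi=S(u)\xi$, the Killing fields $X_\xi=\xi_i(u)\partial_{x^i}-(\dot\xi\cdot x)\partial_v$, and the constancy of the Wronskian is the standard construction behind the \cite{Blau} reference the paper invokes. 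Two comments. First, your conclusion is inherently local: quadraticity is obtained only on the $u$-interval where the $b_{(i)}(u)$ remain linearly independent, which is the right statement given that $\mathfrak{g}_0$ is attached to one leaf; to recognize the whole space as a plane wave one applies the hypothesis leaf by leaf (the paper's phrasing, ``local isometry algebra,'' is consistent with this). Second, what your route buys beyond the bare citation is an explicit determination of $\mathfrak{g}_0$ for an \emph{arbitrary} pp-wave together with the coordinate description of $\pi$ (sending a Killing field to the data $A$, $b(u_0)$, $\dot b(u_0)$, $c(u_0)$ as rotation, translation, shear, center), which also makes the faithfulness of $\pi$, asserted earlier in the paper, evident.
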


	\subsubsection{\textbf{Cahen-Wallach spaces.}}\label{Par. CW space} A Cahen-Wallach space is a plane wave with adapted local coordinates of the form (\ref{Eq: plane waves coordinates}), where the matrix $S(u)$ is constant (does not depend on $u$) and non-degenerate. These spaces are characterized as indecomposable (reducible) symmetric plane waves.
	
	\subsubsection{\textbf{Siklos spaces: a hyperbolization of  pp-waves}}
	Siklos spaces are   another special class of Kundt spaces, defined similarly to pp-waves, but with a different geometry on the leaves: the induced Riemannian metric on $V^\perp / \R V$ is the hyperbolic metric instead of the Euclidean one.  More precisely, a Siklos space has a local coordinate system given by 
	$$ g^{\AdS}_H =  \frac{2 du dv +  H(u, x) du^2 + \mathsf{euc}_n}{(x^n)^2},$$
	where $\mathsf{euc}_n$ is the Euclidean metric on the variable $x = (x^1, \ldots, x^n)$. For $H= 0$, the metric becomes $g^{\AdS}_0 = \frac{g^\Mink_0}{(x^n)^2}$, which is the $\AdS$-metric,  a hyperbolization of  the Minkowski metric.  A Siklos space is, on the other hand, a ``hyperbolization'' of  pp-waves, as $$g^{\AdS}_H = \frac{g^\Mink_H}{(x^n)^2},$$ 
    and $g^\Mink_H$ is the metric of a pp-wave.

	\begin{remark}
		A metric of the form $g_1 =  e^\sigma g$, where $g$ is Kundt and $\sigma$ is a   function that does not depend on $v$, is also Kundt.	So Siklos spaces are obtained by a special conformal change of pp-waves, within the broader class of Kundt spaces. 
	\end{remark}

	The following diagram sketches the hierarchy mentioned above (the arrows indicate inclusions):
	
	\begin{center}
		\[
		\begin{tikzcd}[sep=large,
			every arrow/.append style = {-stealth, shorten > = 5pt, shorten <=2pt},
			]
			\text{Totally geodesic lightlike foliations}  \\                 &  \text{\textbf{Kundt spaces}}   \ar[lu]\\
			\text{Kundt spaces with flat leaves}\ar[ru] & \text{\textbf{Brinkmann spaces}} \ar[u] & \text{\textbf{.}} \ar[lu]\\
			& \text{pp-waves} \ar[lu] \ar[u] & \text{Siklos spaces} \ar[u] \\
			& \text{Plane waves}   \ar[u] & \text{.}  \ar[u]\\
			& \text{Cahen-Wallach spaces} \ar[u] & \text{.}  \ar[u]\\
			& \text{Minkowski space} \ar[u]& \text{Anti-de Sitter space} \ar[u] \\
		\end{tikzcd}
		\]
	\end{center}
	
	\section{Global topology of Kundt spaces}\label{Section: Global topology of Kundt spaces}
	
	We assume here that the manifold $M$ is compact.  Thurston \cite{thurston_existence} has shown using sophisticated topological tools  
	that a manifold admits a codimension $1$ foliation if and only if its Euler characteristic is zero. Apart from this topological obstruction, codimension $1$ foliations
	are extremely flexible. However, requiring the additional condition that the foliation is lightlike and totally geodesic for some Lorentzian metric is considerably more restrictive regarding the topology of the manifolds on which such foliations can be defined. 
	
	\begin{question}
		Given a compact manifold $M$ with zero Euler characteristic, does it admit a locally Kundt structure for some Lorentzian metric on $M$ ?
	\end{question}

	\begin{question}
		Given a foliation $\F$ on a manifold $M$, is there a smooth Lorentzian metric $g$ on $M$ such that $\F$ is lightlike and totally geodesic for $g$ ?    
	\end{question}
	
	On the other hand, it is natural to ask whether these foliations coincide with those that are geodesic with respect to some Riemannian metric.

	\subsection{Dimension 2} Up to a double cover, a Lorentzian surface is diffeomorphic to a torus. Any $1$-dimensional foliation $\F$ on the $2$-torus can be made lightlike geodesic for some Lorentzian metric  $g$. This can be achieved by defining a supplementary foliation   $\F'$ and taking $g$  such that its lightcones are tangent to $\F$ and $\F'$.  
In contrast, not every foliation on the torus $\T^2$ can be geodesic for a Riemannian metric. In fact, a $1$-dimensional foliation on $\T^2$ is geodesic for a Riemannian metric if and only if it is a suspension foliation. Let us explain this in more details. 
    
In \cite[Chap. IV]{godbillon},  a smooth classification of the foliations of the $2$-torus is given.  A smooth foliation on the $2$-torus either contains a Reeb component or is differentiably conjugate to the suspension foliation of a diffeomorphism of the circle. We recall some definitions and facts from \cite[Chap. IV]{godbillon}. 

\begin{figure}[h!]
	\labellist 
	\small\hair 2pt 
	\endlabellist 
	\centering 
	\includegraphics[scale=0.17]{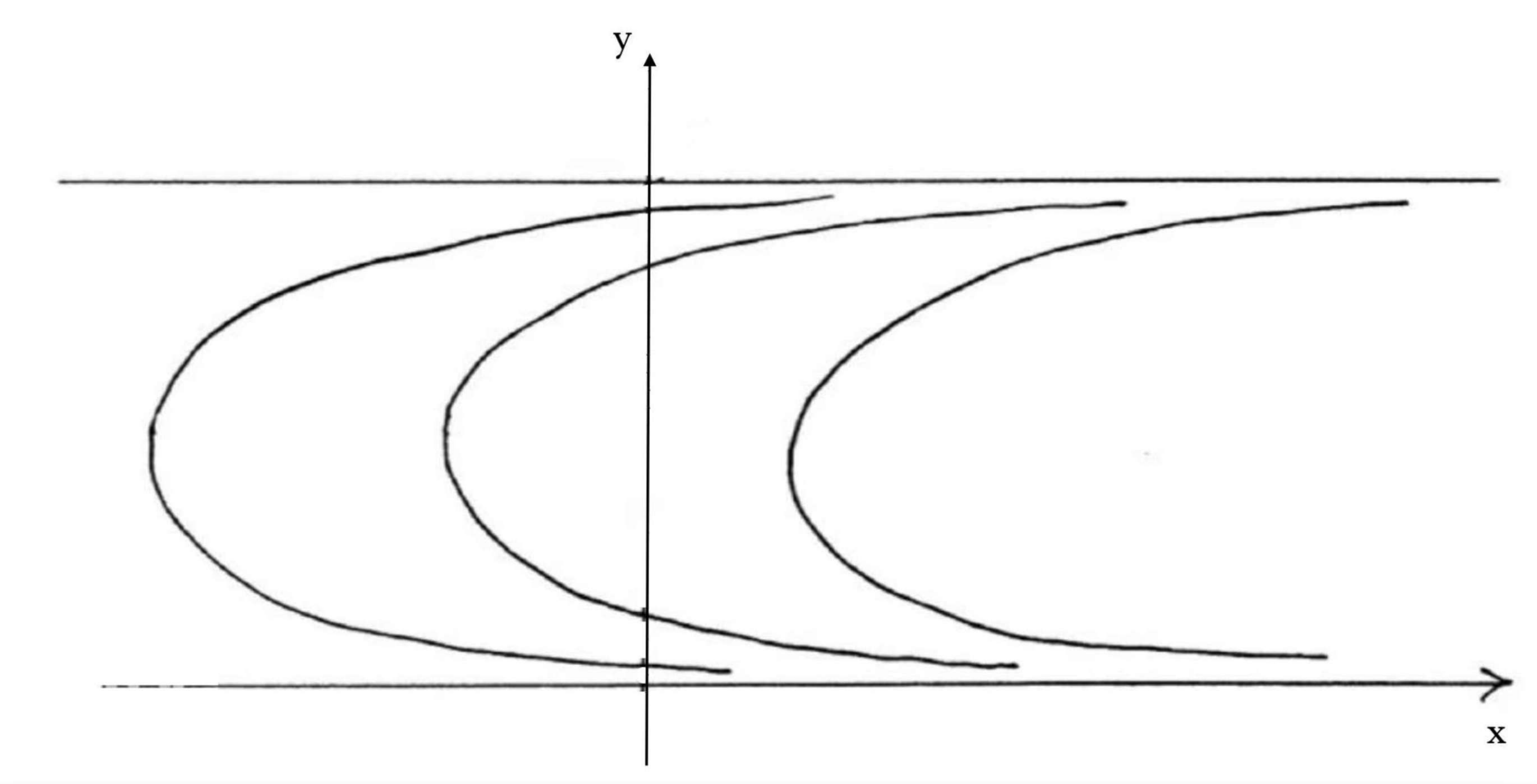} \caption{A Reeb component on $\R^2$. The foliation induced on $\T^2$ by the action of $\Z^2$ on $\R^2$: $(x,y) \mapsto (x+n,y+m)$ is a Reeb component on $\T^2$}
\end{figure}

\paragraph{\textbf{a) Suspension foliations}} Let $f$ be a diffeomorphism of the manifold $N=\S^1$. Let $M:=N \times \R/ (x,t) \sim (f(x),t+1)$ be the suspension manifold of $f$. 
The manifold $M$ is diffeomorphic to the torus $\T^2$ if $f \in \mathsf{Diff}^+(\S^1)$ (i.e. if $f$ preserves orientation), and to the Klein bottle otherwise. 
The foliation on $M$ determined by $\R$ is called the suspension foliation of $f$. 

Any foliation of $\T^2$ that has no Reeb components is smoothly conjugate to a suspension foliation.

The suspension foliation is geodesible for a Riemannian metric on $\T^2$. Such a metric can be constructed as follows. Let $g_0$ be an arbitrary Riemannian metric on $N$. Define $g_1 := f^* g_0$,   the pull back of $g_0$ by $f$, and consider the family of Riemannian metrics on $N$, depending on $t \in \R$, given by $g_t := \frac{1}{2}(1+\cos( \pi t)) g_0 + \frac{1}{2}(1- \cos( \pi t)) g_1$.  Then, the metric $ g_t + dt^2$ on $N \times \R$ induces a metric on $M$. The lines $\{p\} \times \R$, for $p \in N$, are geodesics of $N \times \R$ (while this can be checked directly, one possible geometric argument is that the foliation orthogonal to these lines,  given by the fibers $N \times \{t\}$, $t \in \R$, is, by definition, transversally Riemannian). Therefore, 
the leaves of the suspension foliation are geodesics of $M$.\medskip

\paragraph{\textbf{b) Foliations with Reeb components}} In contrast, a foliation with a Reeb component cannot be totally geodesic for a Riemannian metric. Indeed, it is well known that a foliation $\F$ is totally geodesic with respect to a Riemannian metric if and only if its orthogonal foliation $\mathcal{G}$ is transversally Riemannian. This means the following property: for any two curves $c_1$ and $c_2$ of $\mathcal{G}$, and for any two points $p_1 \in c_1$ and $p_2 \in c_2$ on the same leaf of $\F$, the points $p_1$ and $p_2$ must be equidistant with respect to the distance measured along the leaf of $\mathcal{F}$. However, the presence of a Reeb component in $\mathcal{F}$ implies that $\mathcal{G}$ also has a Reeb component. Hence, any two such curves $c_1$ and $c_2$ have a  limit cycle, which contradicts the previous property.

	\subsection{Dimension 3} 
	\subsubsection{\textbf{Case of a Brinkmann space.}} In this discussion, we consider the case where  $\F$ is the foliation of a $3$-dimensional  compact Brinkmann space; that is, $\F$ is tangent to $V^\perp$, with $V$ being a parallel lightlike vector field for some Lorentzian metric. This setting allows us to derive some topological consequences on the foliation. We will make use of the following fact
    \begin{fact}\label{Fact: 3D Brinkmann is pp-wave}
     In dimension $3$, any Brinkmann space is a pp-wave.   
    \end{fact}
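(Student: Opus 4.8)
The plan is to reduce the statement to a flatness computation on the leaves and to dispose of the only nontrivial curvature component by a symmetry argument. By definition a pp-wave is a Brinkmann space whose leaves, equipped with the connection induced on $T\F$, are flat; hence it suffices to show that in dimension $3$ the leaves of $\F$ are flat. Here the ambient dimension $n+2=3$ forces $n=1$, so each leaf $F$ is a $2$-dimensional totally geodesic submanifold carrying the degenerate induced metric $h$ whose radical is $\R V$, together with the restriction of the lightlike parallel field $V$. Since $\F$ is totally geodesic, $\nabla_A B\in\Gamma(TF)$ whenever $A,B\in\Gamma(TF)$, so the induced connection on $F$ is the restriction of the ambient Levi-Civita connection and its curvature $R$ is the restriction of the ambient curvature.

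On $F$ I would choose a local frame $\{V,X\}$ of $TF$ with $V$ spanning the radical and $h(X,X)=1$. As $\dim F=2$, the whole curvature is encoded in the single operator $R(V,X)$. First, $V$ is parallel, so $R(\,\cdot\,,\,\cdot\,)V=0$ and in particular $R(V,X)V=0$. Writing $R(V,X)X=\alpha V+\beta X$, I would kill the two coefficients separately. The coefficient $\beta$ vanishes by metric compatibility: the induced connection preserves $h$, hence $R(V,X)$ is skew with respect to $h$, giving $0=h(R(V,X)X,X)=\beta\,h(X,X)$ and thus $\beta=0$. The coefficient $\alpha$ is the component of $R(V,X)X$ lying in the radical $\R V$, which the degenerate metric $h$ cannot detect; this is the delicate point. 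To reach it I would pair $R(V,X)X=\alpha V$ with an ambient vector $U$ satisfying $g(U,V)=1$ and use the pair symmetry $g(R(A,B)C,D)=g(R(C,D)A,B)$ of the (pseudo-Riemannian) curvature together with $V$ being parallel:
\[
\alpha=g\big(R(V,X)X,U\big)=g\big(R(X,U)V,X\big)=0,
\]
since $R(X,U)V=0$. Therefore $R(V,X)=0$, the leaf $F$ is flat, and $M$ is a pp-wave.

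The main obstacle is precisely the coefficient $\alpha$: the metric on the leaf is degenerate, so metric compatibility alone controls only the part of the curvature transverse to the radical, and one genuinely needs the ambient structure — that $V$ is parallel on all of $M$, not merely along the leaves — to annihilate the radical component. The symmetry identity above is the cleanest, coordinate-free way to do this; alternatively one can run the computation in a Brinkmann chart $(u,v,x)$, where the only induced Christoffel symbols come from $\nabla_X X=\alpha V+\beta X$ with $\alpha,\beta$ built out of the $v$-independent coefficients $H,W_1,h_{11}$, so that $R(V,X)X=(\partial_v\alpha)V+(\partial_v\beta)X=0$. Finally, it is worth emphasising why dimension $3$ is essential: the argument annihilates every curvature component that involves $V$, but when $n\ge 2$ the purely transverse components $R(X_i,X_j)$ survive and encode the curvature of the $n$-dimensional transverse metric $h_{ij}$, which need not be flat; it is only the case $n=1$ that leaves no such component and forces the leaf to be flat.
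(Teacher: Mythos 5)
Your proof is correct and follows essentially the same route as the paper: reduce to flatness of the leaves, note that the induced connection is the restriction of the ambient one since $\F$ is totally geodesic, kill the curvature components involving $V$ via $\nabla V=0$, and use the pair symmetry $g(R(A,B)C,D)=g(R(C,D)A,B)$ together with a transverse vector $U$ with $g(U,V)=1$ to handle the component the degenerate leaf metric cannot see. In fact your explicit treatment of the radical coefficient $\alpha$ makes precise the step that the paper's own proof passes over tersely (it only pairs against vectors tangent to the leaf, which a priori leaves a residual component in $\R V$), so your write-up is, if anything, the more complete version of the same argument.
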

    \begin{proof}
     Recall that since $\F$ is totally geodesic, the restriction of the Levi-Civita connection to $T\F$ induces a connection on $T\F$. 
     In dimension $3$,  a leaf $F$ of $\F$ is a surface with a   tangent lightlike parallel vector field $V$. Such a surface is necessarily flat. To see this, let $X$ be a (local) vector field tangent to $F$ and transversal to $V$. Since $V$ is parallel, we have  $R(X,Y)V=0$, for any $Y \in \Span(X,V)$. Moreover, for any $Y, Z$ tangent to $\F$, $\langle R(X,V)Y, Z \rangle = \langle R(Y,Z)X,V \rangle$, and the latter vanishes since $TF^\perp \subset \R V$. This yields $ R(X,V)Y=0$. This means that the Riemann curvature tensor is identically zero, hence $F$ is flat with respect to the induced connection. 
    \end{proof}
    By Fact \ref{Fact: 3D Brinkmann is pp-wave}, $\F$ inherits a tangential affine unimodular lightlike structure coming from the induced flat affine connection on $T \F$. This structure forces topological obstructions, which are detailed below. We assume that both the manifold and the foliation are orientable.\medskip
	
	A leaf $F$  of the foliation can be homeomorphic to either a plane, a cylinder, or a torus. Indeed,  since compact pp-waves are geodesically complete \cite{leistner2016completeness}, and $\F$ is totally geodesic, $F$ is complete with respect to the induced affine unimodular lightlike structure $(\mathsf{L}_{\mathsf{u}}(1),\R^2)=(\O(1) \ltimes \Heis_3, \R^2)$ (this is completeness in the sense of $(G,X)$-structures). Thus, the universal cover of $F$ develops bijectively onto $\R^2$. In particular, the holonomy representation $\rho_F: \pi_1(F) \to \Heis_3$ (we assumed everything orientable) is injective, and its image acts properly discontinuously and freely on $\R^2$,  which implies that $\pi_1(F)$ is abelian. Therefore,  $F$ must be either a plane, a cylinder or a torus.  \medskip
	
	Furthermore, the foliation has no vanishing cycles. Recall that a vanishing cycle of a foliation $\F$ on $M$ is a mapping $\sigma: \S^1 \times I \to M$  such that:
	\begin{itemize}
		\item for any $s \in I$, the loop $\gamma_s:= \sigma_{\vert \S^1 \times \{s\}}$  lies in a certain leaf of $\F$
		\item for $s \neq 0$, the loop $\gamma_s$ is homotopic to zero in the leaf 
		\item $\gamma_0$ is not homotopic to zero.
	\end{itemize}
	A typical example of a foliation with a vanishing cycle is given by the Reeb foliation on $\S^3$:  a loop  $\gamma$ on the compact fiber $\T^2$ of the Reeb foliation, which represents one of the generators in $\pi_1(\T^2)$, is a vanishing cycle. 
	The holonomy along a vanishing cycle  is trivial. So the presence of a vanishing cycle  in $\F$ contradicts the injectivity of the holonomy representation mentioned above. 
	
	\subsubsection{\textbf{Case of a locally Kundt space}}
	More generally, we have the following result from \cite{zeghibgeo}
	\begin{theorem}$($\cite[Theorem 11]{zeghibgeo}$).$\label{Theorem: 3D universal cover is R3}
		Let $\F$ be a $C^0$ lightlike geodesic foliation in a compact Lorentzian $3$-manifold. Then:
		\begin{itemize}
			\item A leaf of F is homeomorphic to a plane, a cylinder or a torus. 
			\item $\F$ has no vanishing cycles.
			\item The universal cover is homeomorphic to $\R^3$, foliated by planes.
		\end{itemize}    
	\end{theorem}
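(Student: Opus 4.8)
The plan is to treat the three assertions in sequence, feeding the leaf classification into the global statement through the theorems of Novikov and Palmeira, while extracting all the rigidity from the within-leaf transverse structure provided by Proposition~\ref{Proposition: tot geod foliation iff trans. Riemannian}. As a preliminary step I would invoke the regularity result of \cite{zeghib-lipschitz}: since $\F$ is geodesic for a $C^1$ connection it is locally Lipschitz, so its leaves are $C^1$ and the holonomy maps are Lipschitz homeomorphisms. This tameness is what makes the topological arguments below applicable even for a merely $C^0$ metric. Moreover, since $M$ is compact the nonsingular lightlike field tangent to $\mathcal{V}$ is complete, so every leaf of $\mathcal{V}$ is a complete lightlike geodesic, parametrised by $\R$.

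For the first bullet I would fix a leaf $F$ of $\F$. By Proposition~\ref{Proposition: tot geod foliation iff trans. Riemannian}, $F$ carries the one-dimensional foliation $\mathcal{V}|_F$ together with an invariant transverse length; this is exactly a measured, holonomy-free (in the metric sense) $1$-foliation whose leaves are complete lines. If $F$ is closed, then $\mathcal{V}|_F$ is a nonsingular line field, so $\chi(F)=0$ and, $F$ being orientable, $F$ is a torus. If $F$ is open, the transverse length integrates on the universal cover to a developing coordinate $w$ whose fibres are the complete $\mathcal{V}$-leaves; together with the geodesic parameter $s$ this exhibits $\widetilde F\cong\R^2$ and presents $F$ as fibering over a $1$-manifold with line fibres, whence $F$ is a plane or a cylinder. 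The Brinkmann subcase treated above, where this developing structure is the flat affine unimodular lightlike geometry $(\L_{\mathsf{u}}(1),\R^2)$ with $\L_{\mathsf{u}}(1)=\O(1)\ltimes\Heis_3$, is the model situation, and it shows how the invariant transverse data bounds $\pi_1(F)$ to be abelian of rank $\le 2$.

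For the remaining two bullets I would first show that $\F$ is Reebless and then quote the standard three-manifold machinery. A Reeb component would contain a toral leaf whose holonomy is a one-sided uniform contraction, with interior leaves spiralling onto it; this is incompatible with the invariant transverse length carried by the leaves, in the spirit of the two-dimensional limit-cycle obstruction recorded before Theorem~\ref{Theorem: 3D universal cover is R3}, since the holonomy would simultaneously have to be an isometry of the transverse structure and contract. Granting Reeblessness, Novikov's theorem gives that the leaves are incompressible, i.e. $\pi_1(F)\to\pi_1(M)$ is injective, and that $\F$ has no vanishing cycles, which is the second bullet; equivalently, a vanishing cycle has trivial holonomy and would contradict the injectivity of the developing-map holonomy of the first step, exactly as in the Brinkmann case. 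Passing to the universal cover $\widetilde M$, incompressibility makes every leaf of the lifted foliation the universal cover of a plane, cylinder, or torus, hence a plane, and Palmeira's theorem then identifies $(\widetilde M,\widetilde{\F})$ with $\R^3$ foliated by parallel planes.

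The main obstacle I anticipate is precisely the Reebless / no-vanishing-cycle step. In contrast with the two-dimensional Riemannian picture, here the normal foliation $\mathcal{V}$ is tangent to $\F$ rather than transverse to it, so the invariant transverse length does not directly obstruct the codimension-one holonomy of $\F$; one must instead compare the within-leaf developing holonomies of the boundary torus and of the nearby spiralling leaves and show that they cannot be reconciled with the uniform contraction forced by a Reeb component. Making this comparison rigorous for a merely Lipschitz foliation, and verifying that the leaf-topology argument genuinely excludes higher-genus and non-orientable leaves rather than only producing aspherical ones, are the delicate points; the complete treatment is carried out in \cite{zeghibgeo}.
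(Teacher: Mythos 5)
You should first be aware that the paper does not actually prove Theorem \ref{Theorem: 3D universal cover is R3}: it is quoted verbatim from \cite[Theorem 11]{zeghibgeo}, and the only argument the paper supplies is for the special case of a compact $3$-dimensional Brinkmann space, where each leaf carries a complete tangential $(\L_{\mathsf{u}}(1),\R^2)$-structure and both the leaf classification and the absence of vanishing cycles follow from injectivity of the holonomy representation of that flat structure. Your outline (within-leaf transverse structure to classify leaves, then Reeblessness, then Novikov and Palmeira) has a plausible shape, and your remark that the Brinkmann argument is only a ``model situation'' is accurate, since a general locally Kundt leaf carries no flat affine structure. But as written the proposal is not a proof: its central step is exactly the one you defer, in your own words, to ``the complete treatment \ldots carried out in \cite{zeghibgeo}.''

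Concretely, there are three gaps. (a) The Reebless / no-vanishing-cycle step, which is the heart of the theorem, is supported only by an argument you yourself identify as invalid: the invariant length furnished by Proposition \ref{Proposition: tot geod foliation iff trans. Riemannian} lives \emph{inside} each leaf of $\F$ (transverse to $\mathcal{V}$), whereas the holonomy of the toral boundary leaf of a Reeb component acts in the direction transverse to $\F$, where the Kundt structure provides no invariant data at all; this is precisely the contrast with the two-dimensional Riemannian discussion in the paper, where the orthogonal foliation \emph{is} transversally Riemannian. So ``the holonomy must be an isometry yet contract'' is not a contradiction you can actually derive, and nothing in your text replaces it. (b) In the open-leaf case you assert that the transverse length integrates to a developing coordinate $w$ on $\widetilde F$ whose fibres are single $\mathcal{V}$-leaves; this presupposes that the leaf space of $\widetilde{\mathcal{V}}\vert_{\widetilde F}$ is Hausdorff and develops injectively into $\R$, i.e.\ that branching and within-leaf Reeb strips are excluded, which is itself a step requiring proof (it is the within-leaf analogue of the limit-cycle obstruction) and is what ultimately rules out higher-genus open leaves. (c) Regularity: the theorem concerns $C^0$ metrics, for which there is no Levi-Civita connection, so your preliminary appeals to \cite{zeghib-lipschitz} (which requires a $C^1$ connection), to geodesic completeness of the $\mathcal{V}$-orbits (even for smooth locally Kundt metrics the normal field is only pregeodesic, so completeness of its flow does not yield complete geodesics), and to Novikov's theorem in its classical $C^2$ form, are all unjustified at the stated level of generality; in the $C^0$ setting the ``geodesic'' hypothesis must be interpreted through the metric length property described in the paper's low-regularity section, and the whole argument has to be run through that definition.
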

	
	As a consequence,  $\S^3$ does not admit a locally Kundt structure.

	\begin{remark}
		In dimension $\geq 4$, the universal cover of a compact Kundt space is not necessarily contractible.  For instance, consider the product manifold 
		$M:=\S^1 \times \S^3$ in dimension $4$. Fix a base point $p \in \S^1$, and consider a codimension $1$ foliation $\F$ on $\{p\} \times \S^3$. Choose a vector field $V$ tangent to $\S^1$, which induces an $\S^1$-action on $M$. Using the action of $V$, we extend the foliation $\F$ to a codimension $2$ foliation of $M$. We keep the same notation for the extended foliation. Next, let $h$ be any Riemannian metric on  $ T \F_{\vert \{p\}\times \S^3}$, which we pull back using the $\S^1$-action of $V$ to define a Riemannian metric on $T \F$. Now, take a vector field $U$ on $M$  that commutes with $V$ and is transversal to $\F$. We define a Lorentzian metric $g$ on $M$ as follows: $g( V,V ) = g( U,U ) =0$, $g( U,V ) =1$, and $g=h$ on $T \F$.  Thus, the foliation $\S^1 \times \F$ is lightlike and tangent to $V$. Moreover, by definition, the foliation determined by $V$ is transversally Riemannian on each leaf of $\S^1 \times \F$. Consequently, $\S^1 \times \F$ is totally geodesic for the metric $g$. Here,  $\S^3$ can be replaced by   $\S^{2l+1}$ for any $l \geq 1$, providing a similar example in any (even) dimension. 
	\end{remark}

	\begin{remark}[Fundamental groups of compact  $3$-dimensional Kundt spaces]
		The fundamental group of a compact $3$-dimensional Brinkmann space is (virtually) solvable. However, Example \ref{Ex: non-solvable FG} shows that the fundamental group of a compact Kundt space can be non-solvable. In this particular example, the manifold is a Seifert fiber manifold  with a hyperbolic base (this is shown in \cite{kulkarni}), i.e. it is finitely covered by a circle bundle over a closed orientable surface of genus $\geq 2$. Consequently, its fundamental group is (up to finite index) an extension of a compact surface group of genus $\ge 2$ by an infinite normal cyclic subgroup generated by a regular fiber. On the other hand, by Remark \ref{Remark: weak stable foliation Anosov}, Kundt spaces of lower regularity allow a broader set of possible fundamental groups. 
	\end{remark}

	\section{Dynamics leads to a Kundt structure}\label{Section: Dynamics leads to a Kundt structure}
	
	This section deals with homogeneous Lorentzian manifolds. We will see that a ``big'' isotropy group leads to a structure close to that of a locally Kundt structure. 
	\begin{theorem}[\cite{zeghib_symmetric}]\label{Theorem: Dynamics}
		Let $M$ be a homogeneous Lorentzian manifold with a non-compact isotropy group. Then, $M$ contains a lightlike totally geodesic  hypersurface (and, by homogeneity, such a hypersurface passes through any point of $M$).
	\end{theorem}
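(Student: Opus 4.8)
The plan is to read off the conclusion from the linear dynamics of the isotropy representation. Write $M$ as a homogeneous space and let $H=\Isom(M)_x$ be the isotropy group at a point $x$, which we assume non-compact. Since an isometry fixing $x$ is determined by its differential at $x$, the isotropy representation $\rho\colon H\to \O(T_xM,g_x)\cong \O(1,n)$ is faithful, so $\rho(H)$ is a non-compact subgroup of $\O(1,n)$. Identifying $\O(1,n)$ with $\Isom(\H^n)$ and the boundary $\partial_\infty\H^n\cong \Sphere^{n-1}$ with the space of null directions (the ``celestial sphere'') of $T_xM$, non-compactness of $\rho(H)$ is equivalent to the absence of a fixed point in $\H^n$, hence to the presence of non-trivial dynamics on the null directions. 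Accordingly I would split into two cases: either $H$ contains a \emph{hyperbolic} element (one whose differential is $\R$-diagonalizable with an eigenvalue of modulus $\ne 1$), or it does not.

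In the main case I assume $H$ contains a hyperbolic element $h$, so that $\rho(h)$ is a boost: in a null basis $(e_+,e_-)$ with $g(e_+,e_-)=1$ and an orthonormal spacelike complement $E=\langle e_1,\dots,e_{n-1}\rangle$, one has $\rho(h)e_\pm=\lambda^{\pm1}e_\pm$ with $\lambda\ne 1$ and $\rho(h)|_E\in\O(n-1)$. Let $\ell=\R e_+$ be the attracting null line and set $S:=\exp_x(\ell^\perp)$, a lightlike hypersurface whose radical line field is spanned at $x$ by the null vector $\zeta=e_+$. Because $h$ fixes $x$ and preserves $\ell$, it preserves $S$, and total geodesy of $S$ is measured by the symmetric scalar second fundamental form $b(X,Y)=g(\nabla_XY,\zeta)$ on $TS=\zeta^\perp$, with $S$ totally geodesic iff $b\equiv0$. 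Using that $h$ is an isometry with $\rho(h)\zeta=\lambda\zeta$, a direct computation gives the equivariance $b(\rho(h)X,\rho(h)Y)=\lambda^{-1}b(X,Y)$ at $x$. Testing this on the basis, and using that $\rho(h)|_E$ is orthogonal (hence of operator norm one) while $\lambda^{\pm1}\ne1$, forces, block by block, $b|_{E\times E}=0$, then $b(e_+,\cdot)|_E=0$, and finally $b(e_+,e_+)=0$; thus $b_x=0$.

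To upgrade this to genuine total geodesy I would exploit the contracting dynamics of $h$ on $S$. The fixed-point set $\Sigma_0:=\exp_x(E)$ of $h$ inside $S$ is a spacelike submanifold, and at each of its points $h$ again acts as a boost in the Lorentzian normal plane whose attracting null direction is tangent to $S$; the computation above then gives $b\equiv0$ along $\Sigma_0$. For a point $y\in S\setminus\Sigma_0$ the orbit $h^{-k}(y)$ converges as $k\to\infty$ to a point of $\Sigma_0$, while the isometric equivariance reads $b_{h^{-k}y}(\rho(h)^{-k}X,\rho(h)^{-k}Y)=\lambda^{k}b_y(X,Y)$ with the arguments $\rho(h)^{-k}X$ remaining bounded. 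The left-hand side tends to the value $0$ of $b$ on $\Sigma_0$, while the right-hand side blows up unless $b_y=0$; hence $b\equiv0$ on $S$, and $S$ is a lightlike totally geodesic hypersurface through $x$. Homogeneity then produces such a hypersurface through every point.

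The main obstacle is the remaining case, in which $\rho(H)$ is non-compact but contains no hyperbolic element. Such a group necessarily fixes a point $\eta\in\partial_\infty\H^n$, that is, a null line $\ell_0\subset T_xM$, and acts on it through a unipotent (parabolic) part. Here the equivariance of $b$ degenerates: the radical direction $\zeta$ is now fixed by $\rho(H)$ (eigenvalue $1$), so the scaling that killed $b$ disappears, and indeed $\rho(H)$-invariant second fundamental forms on $\ell_0^\perp$ need not vanish. One still obtains a canonical $H$-invariant, hence $G$-invariant, lightlike hyperplane distribution $\ell_0^\perp$, but its integrability and total geodesy no longer follow formally and must be extracted from the finer structure of the homogeneous metric. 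This parabolic case is where the real work of \cite{zeghib_symmetric} lies; resolving it --- for instance by analysing the limits of the degenerating differentials $\rho(h_k)$, $h_k\to\infty$, via the Cartan ($KAK$) decomposition, or by using the additional isometries forced by homogeneity --- is the crux of the argument.
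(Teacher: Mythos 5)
Your argument is incomplete, and you have correctly located the hole yourself: the parabolic case is left entirely open, and it is not a marginal case. Non-compact isotropy groups acting by unipotent (parabolic) isometries are precisely what occurs in the principal motivating examples of this theory --- plane waves and Cahen--Wallach spaces, where the isotropy fixes the parallel null direction and the scaling you rely on (the eigenvalue $\lambda\neq 1$ of a boost) is simply absent. So your proof covers only the hyperbolic case, and even there it has fixable but real defects: $\exp_x(E)$ is the fixed-point set of $h$ in $S$ only when $\rho(h)\vert_E=\mathrm{id}$ (a hyperbolic element generally has a rotational part, so $\Sigma_0$ may be much smaller), the orbit $h^{-k}(y)$ only subconverges rather than converges, and the equivariance of $b$ away from fixed points requires tracking how $dh^{-k}$ scales a chosen section of the radical line field along $S$, which you do not do.

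The paper's proof avoids the dichotomy altogether and is uniform in the type of isotropy elements. Given a diverging sequence $f_n$ in the isotropy group $H$ of $p$, one considers the graphs $S_n:=\mathsf{Graph}(f_n)\subset (M\times M,\, g\oplus(-g))$; each is an isotropic, totally geodesic submanifold of dimension $d=\dim M$ (isotropic $d$-dimensional submanifolds of a $(d,d)$-signature space are automatically totally geodesic). By compactness of the Grassmannian of $d$-planes in $T_{(p,p)}(M\times M)$, a subsequence of the $S_n$ converges (in a convex neighborhood, convergence being read off from tangent spaces at $(p,p)$) to a limit $L$, still isotropic and totally geodesic, which cannot be a graph since $f_n$ diverges. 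Hence $L$ meets $\{p\}\times M$ non-trivially --- in dimension at most $1$, by isotropy in Lorentzian signature --- and the projection $L'$ of $L$ to the first factor is a totally geodesic hypersurface, shown to be lightlike by a short linear-algebra argument. The ``degeneration of differentials'' that you flag as the crux is thus handled not by a finer analysis of the isotropy representation (no $KAK$ decomposition, no hyperbolic/parabolic split), but by compactifying the space of graphs inside the Grassmannian, where the limit object directly produces the desired hypersurface. If you want to complete your approach instead, you would need a separate mechanism to kill the second fundamental form in the unipotent case, where no eigenvalue contrast is available; the graph-limit trick is precisely a way of never needing one.
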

	
	We follow the proof in \cite{zeghib_symmetric}. Let $H$ denote the isotropy group of a point $ p  \in M$.  Consider an isometry $f \in H$ and its graph, $\mathsf{Graph}(f) \subset M \times M$. According to the next lemma, $\mathsf{Graph}(f)$ is an isotropic, totally geodesic $d$-dimensional submanifold of $M \times M$, equipped with the metric $g \oplus (-g)$ (where $d := \dim M)$. Recall that an isotropic submanifold is one for which the metric restricted to the tangent bundle is identically zero.
	
	\begin{lemma}
		Let $(M,g)$ be a semi-Riemannian manifold of dimension $n$. Let $f \in \mathsf{Isom}(M,g)$, and $S := \mathsf{Graph}(f) = \{(x, f(x)), x \in M\}$. Then $S$ is an isotropic totally geodesic submanifold of $(M \times M, g \oplus (-g))$.
	\end{lemma}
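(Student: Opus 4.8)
The plan is to exploit the fact that $S$ is the image of the embedding $\iota\colon M \to M\times M$, $\iota(x)=(x,f(x))$, whose differential sends $v \in T_xM$ to $(v, df_x v)$; being the graph of a smooth map, $\iota$ is an embedding, so $S$ is an embedded $n$-dimensional submanifold with
\[
T_{(x,f(x))}S = \{(v, df_x v) : v \in T_x M\}.
\]
For the isotropic property I would simply compute, for $v,w \in T_xM$,
\[
(g\oplus(-g))\big((v,df_x v),(w,df_x w)\big) = g(v,w) - g(df_x v, df_x w) = 0,
\]
the last equality being exactly the statement that $f$ is an isometry. Hence the metric $g\oplus(-g)$ vanishes identically on $TS$.

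For the totally geodesic property the key input is the structure of geodesics on a product. First I would recall that the Levi-Civita connection of $-g$ coincides with that of $g$ (rescaling the metric by a nonzero constant leaves the Christoffel symbols unchanged), so $(M,g)$ and $(M,-g)$ have the same geodesics; and that the Levi-Civita connection of the product metric $g\oplus(-g)$ is the direct sum of the factor connections, so its geodesics are precisely the pairs $(\gamma_1,\gamma_2)$ with $\gamma_1,\gamma_2$ geodesics of $(M,g)$. Then I would argue by uniqueness of geodesics: given $(x,f(x)) \in S$ and a tangent vector $(v, df_x v)$, let $\gamma$ be the geodesic of $(M,g)$ with $\gamma(0)=x$ and $\gamma'(0)=v$. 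Because $f$ is an isometry it carries geodesics to geodesics, so $f\circ\gamma$ is the geodesic with initial data $(f(x), df_x v)$. Consequently the unique geodesic of $M\times M$ through $(x,f(x))$ with velocity $(v, df_x v)$ is $t \mapsto (\gamma(t), f(\gamma(t)))$, which lies in $S$. By the definition of totally geodesic recalled earlier, this establishes the claim.

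I could alternatively give a more conceptual argument: the map $F(x,y)=(x,f(y))$ is an isometry of $(M\times M, g\oplus(-g))$ carrying the diagonal $\Delta$ onto $S$, so it would suffice to treat $\Delta$; and $\Delta$ is the fixed-point set of the swap involution $\tau(x,y)=(y,x)$, which satisfies $\tau^*(g\oplus(-g))=-(g\oplus(-g))$ and is therefore affine for the common connection, whence its fixed-point set is totally geodesic. In either route I expect the only genuinely delicate point to be the bookkeeping of the minus sign on the second factor: it is essential for isotropy, yet it must be seen to be irrelevant for the notions of geodesic and of totally geodesic, precisely because $g$ and $-g$ share the same Levi-Civita connection.
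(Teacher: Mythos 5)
Your proof is correct and takes essentially the same route as the paper's: geodesics of $(M\times M, g\oplus(-g))$ split as pairs of factor geodesics, and since the isometry $f$ carries the geodesic $\gamma_1$ to the geodesic $f\circ\gamma_1$, uniqueness of geodesics forces the product geodesic with initial velocity $(v,df_xv)$ to remain in $S$; the isotropy computation you spell out is the part the paper dismisses as straightforward. Your additional observations---that $g$ and $-g$ share the same Levi-Civita connection, and the alternative argument via the swap involution $\tau$---are sound but supplementary to what is the same core argument.
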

	\begin{proof}
		We have $T_{(p,f(p))} S = \{(v,d_pf(v)), v \in T_p X\}$. If $V = (v,d_pf(v))$ is tangent to $S$, then the geodesic in $(M \times M, g \oplus (-g))$ tangent to $V$ is given by $\gamma = (\gamma_1,\gamma_2)$, with $\gamma_1$ the geodesic in $M$ such that $\gamma_1'(0)=v$, and $\gamma_2$ the geodesic in $M$ such that $\gamma_2'(0)=d_p f(v)$. Since $f$ is an isometry, we have $\gamma_2 =f \circ \gamma_1$. Therefore, $\gamma$ lies in $S$, showing that $S$ is totally geodesic. The fact that $S$ is isotropic for $g \oplus (-g)$ is straightforward.	
	\end{proof}
	In fact, the property that $S$ is totally geodesic also follows from the more general  observation below:
	\begin{fact}
		Let $(M, h)$ be a semi-Riemannian manifold of dimension $n = 2d$ and index $d$. An isotropic submanifold $S$ of $M$ of dimension $d$ is totally geodesic.
	\end{fact}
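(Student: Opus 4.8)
The plan is to exploit the fact that $S$ has the maximal possible dimension for an isotropic submanifold, which forces a rigid relationship between the tangent and normal bundles of $S$. The first step is pointwise linear algebra. At a point $x \in S$, the subspace $W := T_x S$ is isotropic of dimension $d$ inside $(T_x M, h)$, a space of signature $(d,d)$. Isotropy gives $W \subseteq W^\perp$, while nondegeneracy of $h$ gives $\dim W^\perp = 2d - \dim W = d = \dim W$; hence $W = W^\perp$. In other words, the normal bundle $TS^\perp$ coincides with $TS$ itself. This is the structural input that replaces the usual Riemannian picture, in which the normal bundle is transverse to $TS$.

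Next I would recast total geodesy in these terms. By the definition recalled earlier, $S$ is totally geodesic precisely when $\nabla_X Y \in \Gamma(TS)$ for all $X, Y \in \Gamma(TS)$, where $\nabla$ is the Levi-Civita connection of $h$. Because $TS = TS^\perp$, a vector is tangent to $S$ if and only if it is $h$-orthogonal to every vector tangent to $S$. Thus it suffices to prove that the trilinear expression $f(X,Y,Z) := h(\nabla_X Y, Z)$ vanishes for all $X,Y,Z \in \Gamma(TS)$. Two symmetries of $f$ will accomplish this. First, since $h(Y,Z) \equiv 0$ along $S$ (isotropy), differentiating along $X$ gives $0 = X\bigl(h(Y,Z)\bigr) = h(\nabla_X Y, Z) + h(Y, \nabla_X Z)$, so $f(X,Y,Z) = -f(X,Z,Y)$, i.e. $f$ is antisymmetric in its last two slots. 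Second, the connection is torsion-free, so $\nabla_X Y - \nabla_Y X = [X,Y]$ is again tangent to $S$; pairing with the tangent vector $Z$ and using isotropy ($h([X,Y],Z) = 0$) gives $f(X,Y,Z) = f(Y,X,Z)$, i.e. $f$ is symmetric in its first two slots.

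Finally I would combine the two symmetries. A trilinear form that is symmetric in its first pair of arguments and antisymmetric in its last pair must vanish, by the usual cyclic juggling
\[
f(X,Y,Z) = f(Y,X,Z) = -f(Y,Z,X) = -f(Z,Y,X) = f(Z,X,Y) = f(X,Z,Y) = -f(X,Y,Z),
\]
whence $2f \equiv 0$ and so $f \equiv 0$. Therefore $\nabla_X Y$ is $h$-orthogonal to all of $TS$, so $\nabla_X Y \in TS^\perp = TS$, and $S$ is totally geodesic. I do not expect a genuine obstacle here: the only conceptual point is the identity $TS = TS^\perp$, and once that is in hand the computation is the standard Koszul-type symmetrization. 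The one thing to be careful about is that all three arguments of $f$ are taken tangent to $S$ (so that isotropy can be invoked in both symmetry computations); this is legitimate precisely because $TS^\perp = TS$, which is why the \emph{maximal}-isotropy hypothesis, rather than mere isotropy, is essential.
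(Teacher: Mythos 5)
Your proof is correct and takes essentially the same route as the paper: both hinge on the identity $TS = TS^\perp$ forced by maximal isotropy, combined with Koszul-type manipulations using isotropy of $S$ and torsion-freeness of $\nabla$. The only cosmetic difference is that you kill $h(\nabla_X Y, Z)$ for all three tangent arguments at once via the symmetric-in-$(X,Y)$ / antisymmetric-in-$(Y,Z)$ trilinear trick, whereas the paper runs the same identities on the diagonal to get $h(\nabla_X X, Y)=0$ and then invokes $T^\perp S = TS$; you also spell out the dimension count $\dim W^\perp = 2d - d = d$ behind that identity, which the paper only asserts in passing.
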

	\begin{proof}
		Let $X \in \Gamma(T S )$. We claim that for any $Y \in \Gamma(T S), g(\nabla_XX,Y)=0$.
		To prove this, write: $h(\nabla_X X,Y)=\nabla_X h(X,Y)-h(X,\nabla_X Y)$. Since $S$ is isotropic, $h(X, Y ) = 0$ for all $X, Y \in \Gamma(TS)$, so the first term of the sum vanishes. Thus, $h(\nabla_X X,Y)=-h(X,\nabla_X Y)$. Next, since $\nabla_X Y - \nabla_Y X \in \Gamma(T S)$, we have $h(\nabla_XY - \nabla_Y X,X) = 0$. This implies   $h(\nabla_X Y,X) = h(\nabla_Y X,X)$, which yields $h(\nabla_X X,Y)=-h(\nabla_Y X,X)$. But $\nabla_Y h(X,X) = 2h(\nabla_Y X,X) = 0$; our claim follows. This shows that $\nabla_X X \in \Gamma(T^\perp S)$. Given that the index is $2d = n$, we have $T^\perp S = TS$. Therefore, $\nabla_X X \in \Gamma(T S)$ for all $X \in \Gamma(TS)$, proving that $S$ is totally geodesic.	
	\end{proof}
	
	\noindent Now, let $f_n \in H$ be a diverging sequence, meaning it has no convergent sub-sequence. Consider the graphs $S_n:=\mathsf{Graph}(f_n)$. By compactness of the Grassmannian of $d$-dimensional subspaces of $T_p M \times T_p M$, we can find a limit $L$ of a subsequence of $S_n$. \\

	\noindent To give a formal meaning to this, consider a small convex neighborhood $C$ of $(p, p)$ in $M \times M$. This means that any two points in $M$ can be joined by a unique geodesic segment contained in $M$. Consider $S_n \cap C$ and let $S_n^0$ denote the connected component of $(p, p)$ in $S_n \cap C$. Now, one can give sense to the convergence of the graphs $S_n$ exactly as in the situation of affine subspaces in an affine flat space. More precisely, $S_n^0$ converge to $L$ if the tangent spaces $T_{(p,p)} S_n^0$ converge to $T_{(p,p)} L$.\\

	\noindent Such a limit $L$ is an isotropic, totally geodesic submanifold of $M \times M$ of dimension equal to $\dim M$. However, $L$ is no longer the graph of some map $f : M \to M$, since otherwise $f_n$ would converge to $f$. \\

	\noindent Thus, the intersection of $L$ with $\{p\} \times M$ is non-trivial, but has at most dimension $1$, because the intersection is isotropic and $M$ is Lorentzian. Therefore, the projection $L'$ of $L$ onto $M \times \{p\}$ is a totally geodesic hypersurface in $M \times \{p\}$. \\

	\noindent To show that $L'$ is lightlike, consider vectors $(X,Y)$ and $(0,Y_0) \in T L$.  Then $(X,Y-Y_0) \in TL$ is isotropic for $g \oplus (-g)$, which implies that $g(Y,Y_0)=0$ for any $Y \in p_2(TL)$.  Moreover, we obtain $g(X,X)=g(Y,Y) \geq 0$.  In particular, the projection $p_2(T L)$ spans a lightlike subspace, of dimension $\geq n-1$. Thus, $L'$, which has dimension $n-1$, must contain  a lightlike vector, as otherwise, $p_2(TL)$ would contain a subspace of dimension $ n-1$ on which the metric is positive definite, contradicting the presence of a lightlike vector orthogonal to it. Hence, $L'$ is  a lightlike subspace. \medskip

	\noindent This completes the proof of Theorem \ref{Theorem: Dynamics}.\bigskip

\textbf{Comment:} We have shown that on a homogeneous Lorentzian manifold $M$ with a non-compact isotropy group, there exists a lightlike totally geodesic  hypersurface through every point of $M$. However, in order for $M$ to admit a locally Kundt structure, there must be a collection of hypersurfaces with a local product structure. This occurs, for example, if the hypersurface through a point $p \in M$ (and hence, throught any point of $M$) is unique. More generally, an interesting question is to investigate under which conditions this setup leads to a locally Kundt structure\blfootnote{The first author is fully supported by the grants, PID2020-116126GB-I00\\
		(MCIN/ AEI/10.13039/501100011033), and the framework IMAG/ Maria de Maeztu,\\ 
		CEX2020-001105-MCIN/ AEI/ 10.13039/501100011033.}.

\bibliographystyle{abbrv}
\bibliography{Bibliography}
	
\end{document}